\documentclass[12pt,reqno]{amsart}

\usepackage{amsmath,amsfonts,amsthm,amssymb,amsrefs,esint,enumerate,color}

\usepackage[mathscr]{eucal}
\usepackage[normalem]{ulem}
\usepackage[margin=2cm]{geometry}
\usepackage{bbm}
\usepackage{tikz}
 
\theoremstyle{plain}
\newtheorem{theorem}{Theorem}[section]
\newtheorem{cor}[theorem]{Corollary}
\newtheorem{lemma}[theorem]{Lemma}
\newtheorem{prop}[theorem]{Proposition}

\theoremstyle{remark}

\newtheorem*{remarka}{Remark}

\newcounter{countD}

\newcommand{\R}{\mathbb{R}}

\renewcommand{\d}{\mathrm{d}}
\newcommand{\av}{{\mathrm{av}}}
\newcommand{\diff}{{\mathfrak{D}}}

\newcommand{\dev}{{\mathrm{dev}}}
\newcommand{\fin}{{\mathrm{fin}}}
\DeclareMathOperator*{\essinf}{\mathrm{essinf}}

\def\intslash{\rlap{\kern  .32em $\mspace {.5mu}\backslash$ }\int}
\def\qsl{{\rlap{\kern  .32em $\mspace {.5mu}\backslash$ }\int_{Q_x}}}

\def\diam{{\text{\rm diam}}}

\def\cf{\emph{cf.\ }}

\def\diam{{\text{\rm diam}}}

\def\rad{{\text{\rm rad}}}

\def\noi{\noindent}

\def\lc{\lesssim}

\def\ga{\gamma}

\def\eps{\varepsilon}

\def\la{\lambda}

\def\Om{\Omega}

\def\fD{{\mathfrak {D}}}

\def\fS{{\mathfrak {S}}}

\def\bbN{{\mathbb {N}}}

\def\bbR{{\mathbb {R}}}
\def\bbS{{\mathbb {S}}}

\def\cA{{\mathcal {A}}}
\def\cB{{\mathcal {B}}}
\def\cC{{\mathcal {C}}}

\def\cE{{\mathcal {E}}}

\def\cI{{\mathcal {I}}}

\def\cK{{\mathcal {K}}}

\def\cR{{\mathcal {R}}}

\def\cU{{\mathcal {U}}}

\def\tempp{{p}}
\def\tempq{{q}}

\def\sC{{\mathscr {C}}}

\title{Two weight estimates for difference quotients}

\dedicatory{Dedicated to Neil Trudinger on the occasion of his 85th birthday}

\author{Carlos P\'erez, Andreas Seeger, Brian Street, Po-Lam Yung}
\address{C. P\'erez, Department of Mathematics \\ University of the Basque Country, Ikerbasque (Basque Foundation for Science) and  BCAM -- Basque Center for Applied Mathematics, Bilbao, Spain}
\email{cperez@bcamath.org}

\address{A. Seeger, Department of Mathematics \\ 
	University of Wisconsin, Madison \\
	480 Lincoln Drive, Madison, WI, 53706, USA}
\email{seeger@math.wisc.edu}

\address{B. Street, Department of Mathematics \\ 
	University of Wisconsin, Madison \\
	480 Lincoln Drive, Madison, WI, 53706, USA}
\email{street@math.wisc.edu}

\address{P.-L. Yung, Mathematical Sciences Institute \\
	Australian National University \\
	Canberra ACT 2601 \\
	Australia \\
	CNRS-ANU International Research Laboratory FAMSI \\ (France Australia Mathematical Sciences and Interactions) \\
	and 
	Department of Mathematics \\
	The Chinese University of Hong Kong\\
	Shatin, Hong Kong} 
\email{PoLam.Yung@anu.edu.au} \email{plyung@math.cuhk.edu.hk}

\begin{document}
	\begin{abstract}
		We prove local and global  two weight estimates  in which we bound   difference quotients of a function in terms of certain weighted $L^p$ norms of its gradient.
	\end{abstract}
	\maketitle
	\section{Introduction}

	In the study of partial differential equations, the weak differentiability of solutions may often be deduced through a consideration of their difference quotients. The classic monograph \cite{03561752} by Gilbarg and Trudinger already highlights the following basic facts: 
	\begin{itemize}
		\item The first order difference quotients of a $W^{1,p}$ function are uniformly in $L^p$ (see Lemma 7.23 in \cite{03561752});
		\item Conversely, if the first order difference quotients of an $L^p$ function are uniformly in $L^p$, then $u \in W^{1,p}$ (see Lemma 7.24 in \cite{03561752}).
	\end{itemize}

	More recently,  other interesting and useful characterizations of membership of $\nabla f$ in $L^p$ have been found which involve first order differences, or  difference quotients. 
	Let us define  for
	$x\neq y$ \begin{equation}\label{Qb-def} \diff_{b}f(x,y)  =  \frac{|f(y)-f(x)|}{|y-x|^{1+b}}
	\end{equation} and  set $\diff_b f(x,x)=0$. Let $b=\gamma/p$. 
	One might try  to  measure the $L^p$ norm of $(x,y)\mapsto \fD_{\ga/p} f(x,y)$ with respect to the measure $|y-x|^{\gamma-d} \d x\d y$ which  reduces   to the 
	$\gamma$-independent 
	expression
	$$\Big(\iint_{(x,y)\in\R^d \times \R^d} \frac{|f(x)-f(y)|^p}{|x-y|^{d+p}}\d x\d y \Big)^{1/p}.
	$$ 
	However, it is well known that if this integral is finite for a locally integrable function $f$, then $f$ must be constant almost everywhere \cite{brezisconstants}.   The failure of any such $L^p$ estimate  can  be remedied by replacing the $L^p(|x-y|^{\ga-d} \d x\d y)$ norm   with a  corresponding Marcinkiewicz ($L^{p,\infty}$) quasinorm.
	In particular, 
	for $1<p<\infty$, $\gamma\neq 0$ and $f\in C^\infty_{\mathrm {c}}(\R^d)$ 
	the $L^{p,\infty}$ quasi-norm  of $\diff_{\ga/p}f$ 
	in $\R^d \times \R^d$ with respect to the measure $|y-x|^{\gamma-d} \d x \d y$ is bounded by $C\|\nabla f\|_{L^p(\R^d)}$, i.e.
	\begin{equation} \label{eq:BSVY}
		\sup_{\lambda > 0} \lambda^p \iint\limits _{\substack{(x,y)\in\R^d \times \R^d :\\\diff_{\ga/p}f(x,y) > \lambda}} |y-x|^{\gamma-d} \d x \d y \lesssim \|\nabla f\|_{L^p(\R^d)}^p.
	\end{equation}
	For the case  $\gamma=d$, in which  one considers Lebesgue measure on $\bbR^d\times \bbR^d$,  \eqref{eq:BSVY}  was shown by  Brezis, Van Schaftingen and Yung \cite{BVY} who also covered the case $p=1$.  For the case $\gamma=-p$ (in  $ (-\infty,-1)$),  in which one considers differences instead of difference quotients, \eqref{eq:BSVY} had already been  shown by Nguyen \cite{Nguyen06} (and independently by A. Ponce and J. Van Schaftingen).
	For other choices of $\gamma$  it was shown by Brezis, Van Schaftingen and two of the authors \cite{BSVY} that  inequality \eqref{eq:BSVY} holds for $1<p<\infty$, $\gamma\neq 0$  and for $p=1$ and $\gamma\in \bbR\setminus[-1,0]$, and these  $\gamma$-ranges are sharp.  The results can be extended to functions  in the homogeneous Sobolev space  
	$\dot{W}^{1,p}(\R^d)$, and to functions of bounded variation when $p=1$; moreover for $f\in \dot W^{1,p}$ one gets equivalences of quasi-norms (see \cite{BSVY}) and thus the desired characterizations of $\dot W^{1,p}$. 
	The results for $\gamma=d$ were used in \cite{2021Lorentz} to extend  the scope of Gagliardo--Nirenberg inequalities in \cite{BrezisMironescu18}. 
	Further developments can be found in \cites{BourgainNguyen,MR4458224, DSSVY, MR4525737,MR4512396,MR4249777,Yang_weighted,Nguyen08,MR4645709,MR4777237}, and  the surveys  \cites{BSVY_revisited, nguyensurvey25}.
	
	The purpose of this paper is to extend the weak type inequality \eqref{eq:BSVY} to versions with possibly different weight functions on either side of the inequality,  so called  {\it two-weight inequalities}.
	We also cover local versions where $\bbR^d$ may be  replaced by a suitable  open subset $\Omega\subset \bbR^d$.
	
	Given a nonnegative measurable function $u$ on $\Omega$ we define a measure $\mu_\gamma^u$ on $\Omega\times \Omega$ 
	by 
	\begin{equation}\label{eq:mugamma}  \d \mu^u_{\gamma} (x,y)  =u(x) |y-x|^{\gamma-d} \d x \d y,
	\end{equation} 
	on $\Omega\times\Omega$, with  $u$  a non-negative measurable function on $\Omega$. 
	Even though $u(x) |y-x|^{\gamma-d}$ may not be locally integrable,
	\eqref{eq:mugamma} defines an inner regular  measure %
	\cite[Proposition 412Q]{FremlinVol4} (i.e.
	$\mu^u_\gamma(E)=\sup_{K\subset E} \mu^u_\gamma (K)$ where the supremum is taken over all compact subsets of $E$).

	For  $f \in C^1(\Om)$ we are interested  in  a priori $\mu^u_\gamma$ bounds for the super-level sets
	$\{(x,y)\in \Om\times \Om: \diff_{\ga/p} f(x,y)>\la\}$  where $\nabla f$ is measured in a weighted $L^p(w)$ norm with respect to a nonnegative weight $w$.

	We shall assume that the weight pair  $(u,w)$  belongs to the Muckenhoupt class  $A_p(\Om)$. 
	Recall that for $1<p<\infty$, a pair of non-negative  measurable  functions $(u,w)$ on $\R^d$ is said to be in  $A_p(\Om)$, written $(u,w)\in A_p(\Om)$, if
	\begin{equation}\label{Ap-2w-Om}
		[u,w]_{A_p(\Om)}:=\sup_B \frac{1}{|B|} \int_{B\cap\Om}  u(x) \d x \,\, \Big(\frac{1}{|B|} \int_{B\cap\Om}  w(y)^{-p'/p}\d y\Big)^{p/p'}
		<\infty,
	\end{equation}
	with  the supremum  taken over all balls $B$ in $\R^d$.
	For $p=1$  a pair of weights $(u,w)$ belongs to the class $A_1(\Om)$ if there is a constant $K$ such that for any ball $B \subset \R^d$,
	\begin{equation}\label{eq:A11-Om}
		\frac{1}{ |B| } \int_{B\cap\Om} u(x) \d x 
		\le K \essinf_{y\in B\cap\Om} w(y). 
	\end{equation}
	We  denote the infimum of all possible  constants $K$ by $[u,w]_{A_1}$. For $\Om=\bbR^d$ we recover the standard Muckenhoupt conditions for pairs of weights in $\bbR^d$. 
	Membership of $(u,w) $ in $A_p(\Om)$ implies that $u\in L^1_{
		\mathrm{loc}}(\Om)$ is locally integrable and that $w(y)>0$ for almost every $y\in \Om$; for this and other basic facts we refer to \S IV.1 and \S IV.5 in  \cite{GCRdF} (\cf also \S\ref{sec:two-weight} below). 
	
	We will study our problem in  appropriate {\it convex} domains $\Omega$,  and with weight pairs $(u,w)\in A_p(\Om)$.  In the class of  admissible domains we wish to include $\bbR^d$ itself, as well as balls and cubes in $\bbR^d$; however domains which are long and thin  may pose problems. To formulate this quantitatively we  denote for a domain $\Omega\subset \bbR^d$ the diameter by   
	$\diam(\Om)=\sup_{x,y\in \Omega}|x-y|$,  and define the  {\it ball deviation} of $\Om$ as 
	\[ \dev(\Om)= \sup_{x\in \Om}\,\,\sup_{0 < r\leq\diam(\Om)} \frac{|B(x,r)|}{|B(x,r)\cap \Om|}. \]
	In two dimensions it may be interpreted as the square of an  eccentricity of $\Om$.
	Note that $\dev(\Om)\ge 1$ for all domains and  $\dev(\bbR^d)=1$. Balls and cubes have  ball deviation  uniformly bounded by a dimensional constant.  All bounded convex domains in $\bbR^d$  have finite ball deviation.
	Among unbounded domains, open convex cones have finite ball deviation while  infinite strips of finite width do not have finite ball deviation.

	We will  prove a priori estimates for 
	the difference quotient operator $\diff_b$
	when acting on functions in $C^1(\Om)$. By completion the a priori estimates extend to larger classes of functions  but we will not   explicitly address 
	the question of what happens for a general function in a weighted Sobolev space with weight $w$. For $b=\ga/p$ we want to estimate the measure  $\mu_{\ga}^u$ of the  sets
	\begin{equation*} \label{sublevel-sets} E_{\la,b } (f)= \{(x,y)\in \Om\times \Om: \diff_{b} f(x,y)>\la\}.
	\end{equation*}

	\begin{theorem} \label{thm:main}
		Let $d \geq 1$, $\Om \subset \R^d$ be a convex domain with finite ball deviation and $f \in C^1(\Om)$. Assume  $1 \leq \tempq \leq \tempp < \infty$, $\tempp > d (\tempq-1)$ and $(u,w) \in A_\tempq(\Om)$. If $\gamma\in \bbR\setminus [-qd,0]$ then 
		for all $\la>0$,
		\begin{equation*}
			\mu_\gamma^u(E_{\la,\ga /p } (f))\lc_{d,\gamma, \tempp, \tempq} [u,w]_{A_\tempq(\Om) } \Big( \frac{\dev(\Om)}{\la}\Big)^\tempp \int_\Om |\nabla f(y)|^\tempp w(y)\d y\,.
		\end{equation*}
		The implicit constant depends only on $d, \ga, p, q$  and is independent of $\Om$, the weights $u$ and $w$, and the function $f$.
	\end{theorem}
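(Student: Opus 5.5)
We follow the pointwise approach used for \eqref{eq:BSVY}: bound $|f(x)-f(y)|$ by a maximal-type average of $|\nabla f|$ over a region adapted to the segment $[x,y]$, and then use the two-weight condition to control the resulting maximal function from $L^p(w)$ to weak $L^p(u)$. Fix $x,y\in\Om$ and write $r=|x-y|$. By convexity, $\Om$ contains the segment $[x,y]$, and also the convex hull of $x$ with $B(y,r)\cap\Om$. The classical Sobolev representation over this hull gives
\[|f(x)-f(y)|\lesssim \dev(\Om)\int_{B(x,2r)\cap\Om}\frac{|\nabla f(z)|}{|x-z|^{d-1}}\,\d z\]
and the same bound with $x$ replaced by $y$. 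Here $\dev(\Om)$ enters only through the normalization of averages over $B\cap\Om$ versus $B$.

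Next we use the Hedberg-type splitting in dyadic shells around $x$:
\[\int_{B(x,2r)}\frac{|\nabla f|}{|x-z|^{d-1}}\d z\lesssim \sum_{k\ge -1}2^{-k}r\,\fint_{B(x,2^{-k}r)}|\nabla f|.\]
The aim is an estimate of the form $\diff_{\ga/p}f(x,y)\lesssim \dev(\Om)\,r^{-\ga/p}\,G_r(x)$, followed by integration in $y$. Since $y$ ranges over the shell $|x-y|\sim 2^{j}$, the $y$-integral contributes $\int_{|y-x|\sim2^j}|y-x|^{\ga-d}\d y\sim 2^{j\ga}$. For each $x$, the set of $r$ with $r^{-\ga/p}G_r(x)>\la$ is then controlled as follows.

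When $\ga>0$, the condition forces $r^{\ga}<(G/\la)^p$ with $G\lesssim M_q(\nabla f)$-type quantities. Summing the geometric series in $r$ gives
\[\mu_\ga^u(E)\lesssim \int u(x)\,\Big(\frac{\mathcal M f(x)}{\la}\Big)^p\d x\]
for a suitable fractional/maximal operator. More carefully, we replace the full average by the maximal function $M_q$ built from $L^q$ averages. The two-weight $A_q$ condition yields weak type $(q,q)$ for $M$ from $L^q(w)$ to $L^{q,\infty}(u)$, with constant $[u,w]_{A_q}$ to the first power. To reach $p\ge q$ we use Hölder within balls, which is where the hypothesis $p>d(q-1)$ enters: the kernel $|x-z|^{1-d}$ must lie in $L^{q'}$ locally in the relevant dual sense, after interpolation of exponents.

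When $\ga<-qd$, large $r$ dominate. We bound $|f(x)-f(y)|$ via the value at $x$ and use $r^{\ga}$ decay, so the sum over $r$ again converges, and the same maximal estimate applies.

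The main obstacle will be making the single maximal bound work with the \emph{weak} two-weight estimate. We cannot simply sum over scales, since $M$ is only weak type for pairs. The approach is to decompose $E_\la$ by the scale at which the dyadic sum is large, at level $\la 2^{-\epsilon|k|}$. For each $k$ we apply the weak-type $A_q$ bound to $|\nabla f|^{q}$ on balls of radius $2^{-k}r$. Since $\ga\notin[-qd,0]$, the factors $2^{j\ga}$ and $2^{-k}$ combine into a convergent double geometric series, with a threshold at $\ga=-qd$ coming from comparing $|B|^{-1}$ normalizations at scale $r$ against the $u$-measure of balls. We expect exactly these endpoint conditions on $\ga$ and $p>d(q-1)$ to emerge from this bookkeeping.
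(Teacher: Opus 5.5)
There is a genuine gap, and it is located where the two-weight structure matters. Your reduction to a quantity $r^{-\ga/p}G_r(x)$ depending on $x$ alone fails for $d\ge2$. Read literally, your displayed bound with only the $x$-centered potential is false: for a bump of height $1$ and radius $\delta\ll r$ around $y$, $|f(x)-f(y)|=1$, while $\int_{B(x,2r)}|\nabla f(z)||x-z|^{1-d}\,\d z\approx(\delta/r)^{d-1}$. The representation formula gives only the \emph{sum} of an $x$-centered and a $y$-centered potential (Lemma~\ref{lem:poincare}). Since $\d\mu^u_\ga=u(x)|x-y|^{\ga-d}\,\d x\,\d y$ is not symmetric, the $y$-term cannot be traded for an $x$-term, and your subsequent steps never treat it. After dyadic splitting, the $y$-term consists of averages of $|\nabla f|$ over $B=B(y,2^{-k}|x-y|)$, while $u$ is integrated over the dilate $2^{k+1}B\ni x$. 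The $A_q$ condition on this dilate costs $2^{kdq}$, and integrating $|x-y|^{\ga-d}$ in $y$ over a ball of radius $\approx 2^{-k}|x-y|$ gains only $2^{-kd}$. This is the factor $2^{kd(q-1)/p}$ in Theorem~\ref{thm:g_weakLp-k}, and $p>d(q-1)$ is precisely what makes $\sum_k2^{-k}2^{kd(q-1)/p}$ converge. Your explanation via local $L^{q'}$-integrability of $|x-z|^{1-d}$ (which amounts to $q>d$) is not the mechanism. Bounding the $y$-term by $M|\nabla f|(y)$ instead leads to $\int(M|\nabla f|)^p\,Mu$, which $[u,w]_{A_q}$ does not control; this is why Proposition~\ref{prop:M2} ends up with $M^2u$.

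Second, even for the $x$-centered part, bounding $\mu^u_\ga(E_{\la,\ga/p}(f))$ by $\la^{-p}\int u\,(\mathcal Mf)^p\,\d x$ requires a \emph{strong} type two-weight estimate for a maximal operator. For $q<p$ this follows by interpolation, but for $q=p$ (Corollary~\ref{cor:Ap}) the pair condition yields only weak type. Your scale-by-scale remedy does not converge: at scale $r\approx2^j$ the threshold is $\la2^{j\ga/p}$, so the weak-type bound gives $u$-measure $\lesssim[u,w]\la^{-p}2^{-j\ga}\|g\|^p_{L^p(w)}$. Multiplying by $\int_{r\approx2^j}r^{\ga-1}\,\d r\approx2^{j\ga}$ gives the same amount for every $j$, so the sum over $j$ diverges. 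What is missing is a covering argument that handles all scales simultaneously in $\Om\times(0,\infty)$ with $\d\nu^u_\ga=u(x)r^{\ga-1}\,\d x\,\d r$ (Theorem~\ref{thm:g_weakLp}).
\begin{itemize}
\item For $\ga>0$, cover a compact part of the superlevel set of $T_{\ga/p}g$ by tents $B\times(0,2r_B)$. Vitali gives disjoint balls $B'$ whose enlarged tents $(5B'\cap\Om)\times[0,4r_{B'}]$ cover it and have $\nu^u_\ga$-measure $\lesssim r_{B'}^\ga u(5B'\cap\Om)$.
\item For $\ga<-qd$, select balls in order of \emph{increasing} radius and cover by $\{(x,\rho):|x-x_j|\le6\rho,\ \rho\ge r_j/2\}$. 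Then sum the dyadic pieces in $\rho$, which carry the factor $2^{n(\ga+qd)}$.
\end{itemize}
In both cases, apply ball by ball H\"older, $(|B|^{-1}\int_{B\cap\Om}g)^p\le(|B|^{-1}\int_{B\cap\Om}g^{p/q})^q$, and the testing form $(|B|^{-1}\int_{B\cap\Om}h)^q\,u(tB\cap\Om)\le t^{qd}[u,w]_{A_q}\int_{B\cap\Om}h^qw$. Then sum using disjointness of the selected balls. The same scheme, with the balls $B(y,2^{-k}|x-y|)$, gives Theorem~\ref{thm:g_weakLp-k}. Your final step of summing in $k$ at levels $\la2^{-\eps k}$ is fine and matches Lemma~\ref{lem:sum_weakLp}.
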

	
	By specializing to the case $q = p$, we obtain:
	\begin{cor}\label{cor:Ap}
		Let $d \geq 1$, $\Om \subset \R^d$ be a convex domain with finite ball deviation and $f \in C^1(\Om)$. If $\gamma \in \bbR\setminus [-pd,0]$ and $1 \leq p < \frac{d}{d-1}$  (in particular $1\le p<\infty$ when $d=1$) 
		then 
		for all $(u,w) \in A_p(\Om)$ and all $\la>0$, 
		\begin{equation}\label{eq:p-p}
			\mu_\gamma^u(E_{\la,\ga/ p } (f))\lc_{d,p,\gamma} [u,w]_{A_p(\Om) } \Big(\frac{\dev(\Om)}{\la}\Big)^p \int_\Om |\nabla f(y)|^p w(y)\d y. 
		\end{equation}
	\end{cor}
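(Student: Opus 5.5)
Corollary~\ref{cor:Ap} is the special case $q=p$ of Theorem~\ref{thm:main}, so the proof amounts to checking the hypotheses.

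Set $q=p$ in Theorem~\ref{thm:main}.

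\emph{The range $1\le q\le p$.} This holds trivially.

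\emph{The condition $p>d(q-1)$.} With $q=p$ it becomes $p>d(p-1)$, that is, $p(d-1)<d$. For $d=1$ this is $0<1$, which always holds, so every $1\le p<\infty$ is admissible. For $d\ge2$ it is equivalent to $p<\frac{d}{d-1}$, which is exactly the assumed range.

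\emph{The weight condition.} The hypothesis $(u,w)\in A_q(\Om)$ becomes $(u,w)\in A_p(\Om)$, and the constant $[u,w]_{A_q(\Om)}$ becomes $[u,w]_{A_p(\Om)}$.

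\emph{The range of $\gamma$.} The excluded set $[-qd,0]$ becomes $[-pd,0]$.

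Since all hypotheses of Theorem~\ref{thm:main} are met, its conclusion gives \eqref{eq:p-p}. The implicit constant depends on $d,\gamma,p,q=p$, hence only on $d,p,\gamma$. There is no substantive obstacle; the only point needing care is the algebraic equivalence $p>d(p-1)\iff p<d/(d-1)$ for $d\ge 2$, together with the separate treatment of $d=1$.
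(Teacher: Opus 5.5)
Your proposal is correct and matches the paper's approach: the corollary is obtained by setting $q=p$ in Theorem~\ref{thm:main}. Your check that $p>d(p-1)$ is equivalent to $p<\frac{d}{d-1}$ for $d\ge 2$, and holds for every $p$ when $d=1$, is exactly the verification needed.
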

	For the case  $q = 1$ in Theorem \ref{thm:main} we obtain:
	\begin{cor}\label{cor:A1}
		Let $d \geq 1$, $\Om \subset \R^d$ be a convex domain with finite ball deviation and $f \in  C^1(\Om)$. Let 
		$1 \leq p < \infty$ and 
		$(u,w) \in A_1(\Om)$.
		If  $\gamma \in \R \setminus [-d,0]$, 
		then  for all $\la>0$,
		\begin{equation*}
			\mu_\gamma^u(E_{\la, \ga/ p} (f))\lc_{d,p,\gamma} [u,w]_{A_1(\Om) }\Big(\frac{\dev(\Om)} {\la}\Big)^p\int_\Om |\nabla f(y)|^p w(y)\d y. 
		\end{equation*}
		In particular, for any non-negative locally integrable  function $u$ on $\Om$, we have
		\begin{equation*}
			\mu_\gamma^u(E_{\la, \ga/p} (f))\lc_{d,p,\gamma} \Big(\frac{\dev(\Om) }{\la}\Big)^p \int_\Om |\nabla f(y)|^p M_\Om u(y)\d y. 
		\end{equation*}
	\end{cor}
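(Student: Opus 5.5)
The statement to prove is Corollary~\ref{cor:A1}. We specialize Theorem~\ref{thm:main} to $q=1$ and then use a standard observation about the maximal function.

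\emph{The first inequality.} Take $q=1$ in Theorem~\ref{thm:main}. We check the hypotheses one by one.
\begin{itemize}
\item The condition $1\le q\le p<\infty$ holds for every $1\le p<\infty$.
\item The condition $p>d(q-1)=0$ is automatic.
\item The excluded range for $\gamma$ is $[-qd,0]=[-d,0]$, which is exactly the range excluded in the corollary.
\end{itemize}
Hence for $(u,w)\in A_1(\Om)$ Theorem~\ref{thm:main} gives the first displayed estimate, with the constant $[u,w]_{A_1(\Om)}$ and an implicit constant depending only on $d,p,\gamma$.

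\emph{The second inequality.} Let $u$ be non-negative and locally integrable on $\Om$. We apply the first inequality to the pair $(u,w)$ with $w=M_\Om u$. Here $M_\Om$ is understood as the maximal operator
\[
M_\Om u(y)=\sup_{B\ni y}\frac1{|B|}\int_{B\cap\Om}u ,
\]
with the supremum over balls $B\subset\R^d$ containing $y$. If the paper's definition uses centered balls instead, the two versions are comparable up to a dimensional constant, since a ball containing $y$ lies in a ball centered at $y$ of twice the radius.

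It remains to verify $(u,M_\Om u)\in A_1(\Om)$ with $[u,M_\Om u]_{A_1(\Om)}\le 1$, or at most a dimensional constant in the centered case. Fix a ball $B$ and a point $y\in B\cap\Om$. Since $B$ contains $y$,
\[
\frac1{|B|}\int_{B\cap\Om}u\le M_\Om u(y).
\]
Taking the essential infimum over $y\in B\cap\Om$ gives \eqref{eq:A11-Om} with $K=1$.

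Two degenerate cases need a remark.
\begin{itemize}
\item If $M_\Om u=\infty$ on a set of positive measure, the right-hand side is infinite and there is nothing to prove.
\item If $u=0$ a.e., the left-hand side vanishes. Otherwise $M_\Om u>0$ everywhere, so the positivity requirement on $w$ is met.
\end{itemize}
Substituting $w=M_\Om u$ and $[u,w]_{A_1(\Om)}\lesssim 1$ into the first inequality yields the second.

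The only point needing any care is this bookkeeping around the definition of $M_\Om$ and the degenerate cases; the argument itself is immediate once Theorem~\ref{thm:main} is available.
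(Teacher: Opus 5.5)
Your argument is correct and matches the paper's: the first inequality is Theorem~\ref{thm:main} with $q=1$, where $p>d(q-1)$ is vacuous and the excluded range becomes $[-d,0]$. The second follows because $[u,M_\Om u]_{A_1(\Om)}\le 1$ directly from the definition of the uncentered $M_\Om$, which is the one the paper uses. One harmless imprecision: if $\{M_\Om u=\infty\}$ has positive measure then in fact $M_\Om u\equiv\infty$. So the right-hand side is infinite unless $f$ is constant on the connected set $\Om$, and in that case the left-hand side is $0$.
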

	The second statement holds since the pair $(u, M_\Om u)$ belongs to the class $A_1(\Om)$. 
	It would be interesting to find new necessary conditions for the ranges of $p$ and  $\gamma\in [-qd,0]$. Some further perspectives on this are offered in Section~\ref{sect:speculations} below.

	Weighted analogs of inequality \eqref{eq:BSVY} with one weight $u=w$  were  considered by Li--Yang--Yuan--Zhang--Zhao \cite{Yang_weighted}. Their results  rely on the wavelet analysis ideas by 
	Cohen--Dahmen--Daubechies--DeVore \cite{Cohen-et-al} and raise questions about  possible improvements in the range of negative $\gamma$ in Corollary \ref{cor:Ap}.  We note that  the example  in \S\ref{sec:example} indicates  a fundamental difference between the one-weight and two-weight theories for our difference quotients.

	\subsection*{\it Notation} 
	Denote by $|E|$ the Lebesgue measure of a set $E$ in $\R^d$ and by $\chi_E$ its characteristic function. For a set $E$ of positive finite Lebesgue measure let  
	\[\av_E f=\frac{1}{|E|}\int_E f \d x\] be the average of $f$ over $E$  with respect to the Lebesgue measure. Sometimes we write $u(E)$ for the integral $\int_E u$.
	A ball centered at $x$ and of radius $r$ is denoted by $B(x,r)$. 
	For $t > 0$, the dilation of a ball $B$ by $t$ times around its center will be denoted $tB$.  Let  $L^{p,\infty}(X,\mu)$ denote the Marcinkiewicz (or weak type $p$) space on the measure space $(X,\mu)$ which consists of all measurable  $F$ on $X$ such that 
	\[
	[F]_{L^{p,\infty}(X,\mu)} ^p:= \sup_{\lambda > 0} \lambda^p\, \mu\big(\{x \in X \colon |F(x)| > \lambda\}\big)
	\] is finite. $F\mapsto [F]_{L^{p,\infty}}$ is a quasi-norm which is homogeneous of degree $1$. Typically we will consider $X=\Om\times\Omega$ with $\mu$ given by $\mu_\ga^u$ as in 
	\eqref{eq:mugamma}.
	We also use the notation $L^{p,\infty}(v)$ if $v$ is a weight function and $\d\mu= v(x) \d x$ with $\d x$ Lebesgue measure. Similarly, for the usual $L^p$ spaces  we concurrently use the notations  $L^p(v)$ and  $L^p(\mu)$  if $\d \mu=v\d x$. 
	The side length of a cube $Q$ will be denoted $\ell(Q)$.

	\section{Main estimates}

	\subsection{Preliminaries on two weight $A_p$ conditions} 
	\label{sec:two-weight} 
	
	The significance of the $A_p(\Om)$ class arises from a  well-known result by Muckenhoupt  for the $\Omega$-variant $M_\Om$ of the Hardy-Littlewood maximal operator,  given by 
	\begin{equation} \label{eq:HLOm} M_\Om f(x)=\sup_{x\in B} \frac{1}{|B|} \int_{B\cap \Om} |f(y)|\d y. \end{equation} 
	Namely  we have $[u,w]_{A_p(\Om)}<\infty$  if and only if the operator $M_\Om$ maps $L^p(\Om,w)$ to $L^{p,\infty}(\Om, u)$, and  then
	\begin{equation}\label{charactAp-two-weights}
		\|M_\Om\|_{L^p(\Om, w) \to L^{p,\infty}(\Om, u)} \approx  [u,w]^{1/p}_{A_p}. 
	\end{equation}
	In other words
	\[ \sup_{\lambda>0}\la^p \int_{\{x: M_\Omega f(x) >\la \}} u(x) \d x\lc C^p \int |f(y)|^p w(y)\d y\] and the infimum over admissible  $C$ in this inequality is comparable to $[u,w]_{A_p}^{1/p}$. 
	The implicit constants in \eqref{charactAp-two-weights}
	depend on the dimension. There is  also  an analogous result  for the {\it centered version}
	$$M_\Om ^{\mathrm{cent}}f(x)=\sup_{r>0} \frac{1}{|B(x,r)|} \int_{B(x,r)\cap \Om} |f(y)|\d y;$$  
	moreover we have  analogues where in both versions balls are replaced by cubes. 
	
	From 
	\cite{GCRdF},  the $A_p(\Om) $ class of weights 
	is characterized by the condition 
	\begin{equation}\label{Ap-2wchart1}
		\Big(\frac{1}{|B|}  \int_{B\cap \Om}  f(y)  \d y\Big)^{p} u(B\cap \Omega)\leq C
		\int_{B\cap \Om} f(y)^p w(y)   \d y
	\end{equation}
	for any ball $B$ and nonnegative measurable $f$; and  the best constant $C$ is equal to  $[u,w]_{A_p(\Om)}$.
	For $p = 1$ this characterization  is immediate. For $p > 1$ one direction follows from H\"older's inequality.
	\begin{multline*}
		\Big(\frac{1}{|B|}  \int_{B\cap\Om} f(y) \d y\Big)^{p} u(B\cap\Om) \\ \leq \Big(  \int_{B\cap\Om} f(y)^p w(y)  \d y\Big) \Big(\frac{1}{|B|} \int_{B \cap\Om} w(y)^{-p'/p}  \d y \Big)^{p/p'} \Big( \frac{1}{|B|} \int_{B\cap \Om} u(x)  \d x \Big)\,,
	\end{multline*}
	and the other one follows  by testing \eqref{Ap-2wchart1} with  $f=w^{-p'/p}$. 
	
	A useful form of this inequality says that for any $t \geq 1$ and any ball $B$, we have
	\begin{align}\label{Ap-2wchart_local} \notag
		\Big(\frac{1}{|B|}  \int_{B\cap\Om} f \, \d y\Big)^{p} u((t B)\cap\Om)
		&\le   t^{pd} \Big(\frac{1}{|t B|}  \int_{(t B)\cap \Om} f \chi_B \, \d y\Big)^{p} u(t B \cap \Om) \\&\leq t^{p d} [u,w]_{A_p} 
		\int_{B\cap\Om} f^p w\, \d y.
	\end{align}
	To see this, we just apply \eqref{Ap-2wchart1} to $t B$ and $f \chi_B$ in place of $B$ and $f$.

	\subsection{The main technical ingredients}\label{sec:tech}
	We provide   the  main technical ingredients in the proofs of Theorem \ref{thm:main}. In this section we assume that $\Omega$ is  a {\it  general measurable  subset of $ \bbR^d$ with positive Lebesgue measure.} Define, for $b \in \R$, an operator $T_b$ acting on measurable  nonnegative functions $g$ by 
	\begin{equation}\label{eq:Tgammadef} T_{b} g(x,r) =  \frac{1}{r^{d + b}} \int_{B(x,r) \cap \Om} g(z) \d z, \quad x\in \Om, r>0.
	\end{equation} 
	We estimate this for the parameter $b=\gamma/p$ and choose as the measure on $\Om \times (0,\infty)$
	\[\d \nu_\gamma^u = u(x) r^{\ga-1} \d x\d r\,.\]
	As $\mu^u_\gamma$ above,  $\nu^u_\gamma$ is an inner regular measure,  for all $\gamma\in \bbR$ \cite[Proposition 412Q]{FremlinVol4}.

	\begin{theorem} \label{thm:g_weakLp}
		Let $d \geq 1$, and let $\Omega\subset \bbR^d$ have positive  Lebesgue  measure.  Let $1 \le  \tempp < \infty$, and let $g$ be a non-negative measurable function on $\Om$.
		\begin{enumerate}[(a)]
			\item \label{thm:g_weakLp_a} If $(u,w) \in A_p(\Om)$, then for any $\gamma > 0$, we have
			\[
			\left[ T_{\gamma/\tempp} g \right]_{L^{\tempp,\infty}(\Om \times (0,\infty), \nu_\ga^u)} \lesssim_{d, \gamma, p} [u,w]_{A_p(\Om)}^{1/\tempp} \|g\|_{L^\tempp(\Om, w)}.
			\]
			\item \label{thm:g_weakLp_b}
			If $1\le q\le p$ and $(u,w) \in A_\tempq(\Om)$, then for any $\gamma < -\tempq d$ we have
			\[
			\left[ T_{\gamma/\tempp} g \right]_{L^{\tempp,\infty}(\Om \times (0,\infty), \nu_\ga^u)} \lesssim_{d,\gamma,p,q} [u,w]_{A_\tempq(\Om)}^{1/\tempp} \|g\|_{L^\tempp(\Om, w)}.
			\]
		\end{enumerate}
	\end{theorem}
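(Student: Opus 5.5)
The plan is to reduce both parts to one-scale estimates of the form \eqref{Ap-2wchart_local}, plus a way to sum over scales, exploiting that for fixed $x$ the $r$-section of the superlevel set is essentially a half-line. Fix $\lambda>0$, write $b=\gamma/p$, and let $S_x=\{r>0: T_b g(x,r)>\lambda\}$. Then $\nu^u_\gamma(\{T_bg>\lambda\})=\int u(x)\int_{S_x}r^{\gamma-1}\,\d r\,\d x$, and the two parts differ in which end of $S_x$ matters. By monotone convergence and inner regularity we may assume $g$ is bounded with compact support, so all radii below are finite and positive.

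\emph{Part (a), $\gamma>0$.} Put $R(x)=\sup S_x$. Then $\int_{S_x} r^{\gamma-1}\,\d r\le R(x)^\gamma/\gamma$, and it suffices to show $\int u(x)R(x)^\gamma\,\d x\lesssim [u,w]_{A_p}\lambda^{-p}\|g\|^p_{L^p(w)}$.
\begin{enumerate}[(i)]
\item For each $x$ with $R(x)>0$, choose $r_x\in S_x$ with $r_x\ge R(x)/2$ and set $B_x=B(x,r_x)$. Then $\frac{1}{|B_x|}\int_{B_x\cap\Om} g\gtrsim \lambda r_x^{\gamma/p}$.
\item Apply the Vitali covering lemma, selecting largest radii first, to get disjoint balls $B_i=B_{x_i}$ with radii $r_i$. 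Every admissible $x$ lies in some $5B_i$ with $B_x\cap B_i\ne\emptyset$ and $r_x\le 2r_i$, so $R(x)\le 4r_i$.
\item Hence $\int uR^\gamma\lesssim\sum_i r_i^\gamma\, u(5B_i\cap\Om)$.
\item Apply \eqref{Ap-2wchart_local} with $t=5$ and $f=g$. This gives $u(5B_i\cap\Om)\lesssim [u,w]_{A_p}(\lambda r_i^{\gamma/p})^{-p}\int_{B_i\cap\Om}g^pw$.
\item The factors $r_i^\gamma$ cancel. Disjointness of the $B_i$ then gives the bound.
\end{enumerate}
Here $\gamma>0$ is exactly what makes the largest admissible radius dominate, so that the largest-first Vitali selection is the right one.

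\emph{Part (b), $\gamma<-qd$.} Now put $\rho(x)=\inf S_x$, so that $\int_{S_x}r^{\gamma-1}\,\d r\le \rho(x)^\gamma/|\gamma|$. Split $\Om$ into the sets $F_j=\{x:\rho(x)\in[2^j,2^{j+1})\}$; the measure is then at most $\sum_j 2^{j\gamma}u(F_j)$.
\begin{enumerate}[(i)]
\item At a single scale $j$, cover $F_j$ by a grid of boundedly overlapping balls $B$ of radius $2^j$. For $x\in B\cap F_j$ there is $r\approx 2^j$ with $B(x,r)\subset 4B$, hence $\frac1{|4B|}\int_{4B\cap\Om}g\gtrsim\lambda 2^{j\gamma/p}=:\lambda_j$.
\item The part of $g$ where $g\le c\lambda_j$ contributes at most half of this average. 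So the same lower bound holds for $g_j=g\chi_{\{g>c\lambda_j\}}$.
\item Apply \eqref{Ap-2wchart1} in its $A_q$ form to $f=g_j$ on $4B$. Using $g_j^q\le (c\lambda_j)^{q-p}g_j^p$ (valid since $q\le p$), this gives $u(4B\cap\Om)\lesssim [u,w]_{A_q}\lambda_j^{-q}\int_{4B} g_j^q w\lesssim [u,w]_{A_q}\lambda_j^{-p}\lambda_j^{p-q}\lambda_j^{q-p}\cdots$. More precisely, one keeps $2^{j\gamma}u(F_j)\lesssim [u,w]_{A_q}\lambda^{-q}2^{j\gamma(1-q/p)}\int_{\{g>c\lambda_j\}}g^qw$.
\item Sum in $j$ for fixed $y$. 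The scales with $g(y)>c\lambda 2^{j\gamma/p}$ form a half-line $j\ge j_0(y)$, because $\gamma<0$. For $q<p$ the factor $2^{j\gamma(1-q/p)}$ decays, so the sum is dominated by $2^{j_0\gamma(1-q/p)}\approx (g(y)/\lambda)^{p-q}$. This yields $\lambda^{-p}\int g^pw$.
\end{enumerate}

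The main obstacle is the endpoint $q=p$ of (b), where the geometric sum over scales degenerates. This is where I expect the hypothesis $\gamma<-qd$ to be genuinely needed. For that case I would first try to also use the growth factor $t^{qd}$ in \eqref{Ap-2wchart_local}. One compares the ball at scale $2^j$ with an enlarged ball $2^mB$ at which the average of $g$ is still comparable, which costs $2^{mqd}$ in $u$. This is set against the gain $2^{m\gamma}$ coming from $\rho(x)^\gamma$, so that $\gamma<-qd$ makes the resulting double sum over $(j,m)$ converge. I would check that this mechanism also handles the boundary effects from $\Om$, since only averages over $B\cap\Om$ with the normalization $|B|$ enter, and hence no ball deviation appears, consistent with the statement.
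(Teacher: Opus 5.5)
Your part (a) is essentially the paper's argument: a largest-first Vitali selection, the bound $\sum_i r_i^\ga u(5B_i\cap\Om)$, then \eqref{Ap-2wchart_local} with $t=5$, and disjointness. Your part (b) is correct when $q<p$, and in fact it never uses $\ga<-qd$. It is in effect the pointwise bound $T_{\ga/p}g(x,r)\lc_d r^{-\ga/p}M_\Om g(x)$, combined with the strong type bound $\|M_\Om g\|_{L^p(u)}\lc[u,w]_{A_q(\Om)}^{1/p}\|g\|_{L^p(w)}$, which comes from Marcinkiewicz interpolation for $q<p$; this is the device in the paper's closing one-dimensional remark.

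The genuine gap is the endpoint $q=p$, which you leave as a sketch. This case cannot be skipped, for three reasons. Since $[u,w]_{A_p(\Om)}\le[u,w]_{A_q(\Om)}$, it carries the weakest hypothesis on the weights, so it is not implied by your $q<p$ cases. It is the only case when $p=1$. And it is exactly what Corollary~\ref{cor:Ap} needs.

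In your scheme with $q=p$, the per-scale bound becomes $2^{j\ga}u(F_j)\lc[u,w]_{A_p(\Om)}\la^{-p}\int_{\{g>c\la_j\}}g^pw$. Since $\la_j=\la 2^{j\ga/p}\to0$ as $j\to\infty$, every point with $g(y)>0$ is charged at all large scales $j$. The divergence is in $j$, and decay in an auxiliary enlargement parameter $m$ does not cure it. Your heuristic that the average of $g$ over $2^mB$ stays comparable to its average over $B$ is also not available in general.

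What is missing is a selection of pairwise \emph{disjoint} balls across all scales, so that each piece of $\int g^pw$ is charged only once. The paper does this as follows.
\begin{itemize}
\item By inner regularity and compactness, a compact $\cK\subset E(\la)$ is covered by finitely many sets $(B\cap\Om)\times(r_B/2,\infty)$, where each ball satisfies $r_B^{-d-\ga/p}\int_{B\cap\Om}g>\la$.
\item Disjoint balls $B_1,\dots,B_N$ are selected by repeatedly taking a ball of \emph{smallest} radius among those disjoint from the balls already chosen.
\item Every ball of the cover then meets some $B_j$ with $r_B\ge r_j$. This forces its region into the cone $\{(x,\rho)\in\Om\times[r_j/2,\infty): |x-x_j|\le 6\rho\}$, whose $\nu^u_\ga$-measure is $\lc\sum_{n\ge0}(2^nr_j)^\ga\, u(6\cdot 2^nB_j\cap\Om)$.
\item The stopping inequality is used only on $B_j$. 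After \eqref{eq:gHolder}, one applies \eqref{Ap-2wchart_local} with exponent $q$, $t=6\cdot2^n$ and $f=g^{p/q}$ on $B_j$. The factor $t^{qd}$ pays precisely for the drop of the average of $f\chi_{B_j}$ on the enlarged ball, so no comparability of averages is needed.
\item This yields $2^{n(\ga+qd)}\la^{-p}[u,w]_{A_q(\Om)}\int_{B_j\cap\Om}g^pw$. The sum in $n$ converges precisely because $\ga<-qd$, and disjointness of the $B_j$ finishes the proof.
\end{itemize}
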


	\begin{proof}
		For $\lambda > 0$, let 
		\begin{equation*}
			E(\lambda) = \big\{(x,r) \in \Om \times (0,\infty) \colon T_{\ga/\tempp} g(x,r) > \lambda \big \}.
		\end{equation*} 
		
		\noi{\it The case $\gamma>0$.} 
		By the inner regularity of $\nu^u_\gamma$ it suffices to prove that for any compact subset $\cK$ of $E(\lambda)$, we have           
		\begin{equation}\label{eq:Tgoal}
			\nu_\gamma^u(\cK) \lc_{d, \ga, p} [u,w]_{A_p(\Om)} \la^{-\tempp} \int_\Om |g(y)|^\tempp w(y)  \d y.
		\end{equation}

		The set $\{B(x,r) \times (0,2r) \colon (x,r) \in \cK\}$ covers $\cK$, and hence admits a finite subcover, say
		\[ B(x_i,r_i) \times (0,2r_i), \text{  for  $i=1,\dots,N$.} \]
		Let $\sC_{\mathrm{fin}}$ be the collection of open balls $B(x_i,r_i)$, $i=1,\dots,N$. 
		Since there are only finitely many balls in $\sC_{\mathrm{fin}}$, each of which has a finite radius,
		the Vitali covering selection algorithm (see e.g. \cite{00447275} Chapter I, Section 3)
		gives  a disjoint subcollection $\mathcal{I}'$ of $\sC_{\mathrm{fin}}$ such that every $B \in \sC_{\mathrm{fin}}$ is contained in $5B'$ for some $B' \in \mathcal{I}'$, with $r_B \leq 2 r_{B'}$ (we write $r_B$ for the radius of $B$). Now if $(x,r) \in \cK$, then $(x,r) \in B(x_i,r_i) \times (0,2r_i)$ for some $i$, with $B(x_i,r_i) \in \sC_{\mathrm{fin}}$, so together with $x \in \Om$ we obtain $(x,r) \in (B \cap \Om) \times [0,2r_B]$ for some $B \in \sC_{\mathrm{fin}}$. Using this  together with the covering property of $\mathcal{I}'$, we obtain
		\[
		\cK \subset \bigcup_{B \in \sC_{\mathrm{fin}}} (B \cap \Om) \times [0,2r_B] \subset \bigcup_{B' \in \mathcal{I}'} (5B' \cap \Om) \times [0,4r_{B'}].
		\]
		(In short, if a ball $B$ is contained in a dilation of another ball $B'$, then the Carleson tent $B \times [0, 2r_B]$ over $B$ is contained in the dilation of the Carleson tent $B' \times [0, 2r_{B'}]$ over $B'$.) 
		It follows that 
		\begin{align*}
			\int_{\cK} u(x) r^{\gamma-1} \d x \d r 
			&\leq \sum_{B' \in \mathcal{I}'} \int_{5B' \cap \Om} u(x) \d x \int_0^{4 r_{B'}} r^{\gamma-1} \d r \\
			&\lesssim_{\ga} \sum_{B' \in \mathcal{I}'} r_{B'}^{\gamma}  \, u(5B' \cap \Om).
		\end{align*}
		We used $\ga > 0$ in the last inequality. Now observe  $r_{B'}^{\gamma} \approx_{d,p} \Big( \frac{1}{|B'|} r_{B'}^{d + \frac{\gamma}{\tempp}}\Big)^\tempp$ and using that $B' \in \mathcal{I}'$, we obtain
		\begin{align*}
			&\int_{\cK} u(x) r^{\gamma-1} \d x \d r \lc_{d,\ga,p} \frac{1}{\lambda^\tempp} \sum_{B' \in \mathcal{I}'} \Big( \frac{1}{|B'|} \int_{B' \cap \Om} g(z) \d z \Big)^\tempp u(5B' \cap \Om) 
			\\\notag &  \lesssim_{d,p} \frac{[u,w]_{A_\tempp(\Om)}}{\lambda^\tempp} \sum_{B' \in \mathcal{I}'}  \int_{B' \cap \Om} g(z)^\tempp w(z)\, \d z 
			\lc \frac{[u,w]_{A_\tempp(\Om)}}{\lambda^\tempp} \int_\Om|g(z)|^\tempp w(z) \d z. \notag
		\end{align*}
		Here the second inequality is 
		from \eqref{Ap-2wchart_local}, and the third inequality uses the disjointness of the balls 
		in  $\mathcal{I}'$.  We established  \eqref{eq:Tgoal} in the case $\gamma>0$. 
		
		\noi{\it The case $\gamma<-\tempq d$ with $\tempq \in [1,\tempp]$.} %
		
		Let $\cK\subset E(\la)$ be a compact set. Again, by inner regularity of $\nu^u_\gamma$ it suffices to show 
		that 
		\begin{equation}\label{eq:TgoalK}
			\nu_\gamma^u(\cK)\lc_{d, \ga, p, q} [u,w]_{A_{\tempq}(\Om)} \la^{-\tempp} \int_\Om |g(y)|^\tempp w(y) \d y.
		\end{equation} 
		For every $(x,r)$ in $\cK$ there exists an open  ball $B(x,r)$ centered at $x$ with radius $r$ such that 
		\[\frac{1}{r^{d+\frac\gamma \tempp}}\int_{ B(x,r)\cap \Om }g(z) \d z>\lambda\] 
		The open sets $B(x,r)\times (\frac r2, \infty)$ cover $\cK$ and therefore there is a finite subcover, i.e. there exists a finite family $\sC_0$ of balls $B$ of radius $r_B$ such that  $r_B^{-d-\gamma/\tempp}  \int_{B\cap \Om} g\d z>\lambda$, and  $\cK\subset \bigcup_{B\in \sC_0} Z_B$, with $Z_B=(B \cap \Om) \times (r_B/2, \infty)$.

		We use a reverse Vitali selection procedure.
		We select  disjoint balls   
		$B_1,\dots, B_N$, such that the center of $B_j$ 
		is $x_j$,  its radius is $r_j$, 
		$B_j\in \sC_{j-1}$,  the radius of $B_j$ is the {\it minimum} of all radii of balls in $\sC_{j-1}$, and $\sC_j$ consists of all balls in $\sC_{j-1}$ which are   disjoint from $B_j$
		(by induction these balls have radius at least $r_j$ and these balls are disjoint from all already selected balls $B_1,\dots B_j$). 
		
		Let \[S(B_j)= \Big\{ (x,\rho) \in \Om \times [r_j/2,\infty) \colon  |x-x_{j} | \le 6\rho \Big\}.\]
		
		We claim that if $B$ is any ball in $\sC_{j-1}$ such that $B\cap B_j\neq \emptyset$, then 
		\[Z_B=(B \cap \Om) \times (r_B/2,\infty)\subset S(B_j).\]  
		Indeed, let $x\in B$, $z\in B\cap B_j$ then $|x-x_j|\le |x-z|+|z-x_{j} |\le 2 r_B+r_{B_j} \le 3 r_B$; hence if $\rho\ge r_{B}/2$ then $|x-x_j|\le 3r_B\le 6\rho$ and thus $Z_B \subset S(B_j)$ .
		
		In particular we get $\cK\subset \bigcup_{j=1}^N S(B_j)$ and therefore $\nu_\gamma^u(\cK) \le \sum_{j=1}^N \nu_\gamma^u (S(B_{j})).$
		
		We fix $j$ and let for $n=0,1,2,\dots$
		\[S_n(B_j)= \{ (x,\rho) \in \Om \times \R \colon |x-x_j|\le 6\rho, \,2^{n-1} r_j \le \rho< 2^n r_j\}. \] Then $S(B_j)=\bigcup_{n =0}^\infty S_n(B_j)$.
		
		Now since $B_j \in \sC_0$
		we may use the condition $r_j^{dp+\ga}\le \la^{-p}(\int_{B_j\cap\Om} g)^p$ and estimate
		\begin{align*}
			\nu_\gamma^u (S_n(B_j)) &\le \int_{2^{n-1} r_j} ^{2^n r_j} 
			\rho ^{\gamma-1} \int_{B(x_j,6\rho) \cap \Om} u(x) \,\d x \, \d\rho \\
			&\lc_{\ga} (2^n r_j)^\gamma \int_{B(x_j,6\cdot 2^n r_j) \cap \Om} u(x) \d x \\
			&\lc_{d,\ga,p} 2^{n \gamma} \Big( \lambda^{-1} \frac{1}{|B_j|} \int_{B_j \cap \Om} g(z) \d z \Big)^\tempp u((6 \cdot 2^n B_j) \cap \Om).\end{align*}
		If $\gamma < -\tempq d$, with $\tempq \in [1,\tempp]$, we first apply H\"older's inequality in $g$:
		\begin{equation} \label{eq:gHolder}
			\Big( \frac{1}{|B_j|} \int_{B_j \cap \Om} g(x) \d x \Big)^\tempp
			\leq \Big( \frac{1}{|B_j|} \int_{B_j \cap \Om} g(x)^{\tempp/\tempq} \d x \Big)^\tempq
		\end{equation}
		and then use \eqref{Ap-2wchart_local} with exponent $\tempq$, %
		\begin{equation} \label{Ap-2wchart_local-q}  u((6 \cdot 2^n B_j) \cap \Om) \Big(\frac{1}{|B_j|} \int_{B_j \cap\Omega} g(y)^{p/q} \d y\Big)^q \lc 2^{ndq} [u,w]_{A_q(\Om)} \int_{B_j\cap\Om} g(y)^p w(y) \d y.
		\end{equation}
		We obtain
		\[
		\nu_\gamma^u (S_n(B_j)) \lc 6^q 2^{n (\gamma + \tempq d)} \lambda^{-\tempp} [u,w]_{A_\tempq(\Om)} \int_{B_j \cap \Om} g(y)^\tempp w(y) \d y
		\]
		and, since $\gamma<-\tempq d$,  we may sum in $n$ and get
		$$\nu_\gamma^u(S(B_j)) \lc_{d, \ga , p, q} \la^{-\tempp} [u,w]_{A_\tempq(\Om)} \int_{B_j\cap \Om} g(y)^\tempp w(y) \d y.$$ By the disjointness of the $B_j$, 
		\begin{align*}\nu_\gamma^u(\cK)\leq \sum_{j=1}^N \nu_\gamma^u(S(B_j))\lc \frac{[u,w]_{A_\tempq(\Om)}}{\la^{\tempp}} \sum_{j=1}^N \int_{B_j\cap \Om} g(y)^\tempp w(y)\d y \leq\frac{[u,w]_{A_\tempq(\Om)}}{\la^{\tempp}}  \int_\Om g(y)^\tempp w(y) \d y.
		\end{align*}
		and \eqref{eq:TgoalK} is proved. 
	\end{proof}

	For any $b \in \R$, any nonnegative measurable function $g$ on $\Om$ and any $k\ge 2$ define 
	\begin{equation}\label{eq:Skgammadef} T^k_{b} g(x,y) =  \frac{2^{kd}}{|y-x|^{d + b}} \int_{B(y, 2^{-k}|y-x|) \cap \Om} g(z) \d z. 
	\end{equation} 
	Note that here we are breaking the symmetry in $x,y$ by integrating over the ball of radius $2^{-k}|x-y|$ centered at $y$ while the weight function $u$ depends on $x$. We use the inner regular measure $\mu^u_\gamma$ as defined in \eqref{eq:mugamma}. As before the relevant parameter will be $b=\gamma/p$.

	\begin{theorem} \label{thm:g_weakLp-k} Let $d \geq 1$, $\Omega\subset \bbR^d$ be a measurable subset of positive Lebesgue measure, $k \geq 2$, and $g$ be a non-negative measurable function on $\Om$.  Let $1\le \tempq \le \tempp <\infty$ and $(u,w) \in A_\tempq(\Om)$.
		Then, for any $\gamma \in \bbR\setminus [-qd,0]$, 
		\[\left[ T^k_{\gamma/\tempp} g\right]_{L^{\tempp,\infty}(\Om \times \Om, \mu_\gamma^u)} \lesssim_{d,\gamma,p,q} 2^{\frac{kd(\tempq-1)}{\tempp} } [u,w]_{A_\tempq(\Om)}^{1/\tempp} \|g\|_{L^\tempp(\Om, w)}.
		\]
		The implicit constant is independent of $k$. %
	\end{theorem}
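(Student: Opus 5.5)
Our plan is to reduce Theorem~\ref{thm:g_weakLp-k} to Theorem~\ref{thm:g_weakLp} by passing to polar-type coordinates around $y$ and absorbing the small ball $B(y,2^{-k}|y-x|)$ into the ball structure of the operator $T_b$.

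\emph{Step 1: change of variables.} Write $\rho = 2^{-k}|y-x|$. Then
\[
T^k_{b} g(x,y) = 2^{kd}\,(2^k\rho)^{-d-b}\int_{B(y,\rho)\cap\Om} g = 2^{-kb}\, T_b g(y,\rho).
\]
Integrating out the direction of $x-y$, we have
\[
\mu^u_\gamma(\{T^k_b g>\la\}) = \int_\Om \int_0^\infty \chi_{\{T_bg(y,\rho)>2^{kb}\la\}}\,\Big(\int_{|x-y|=2^k\rho} u(x)\,\d\sigma(x)\Big)(2^k\rho)^{\gamma-d}2^k\,\d\rho\,\d y .
\]
The difficulty is that $u$ is evaluated at $x$, far from $y$, while Theorem~\ref{thm:g_weakLp} measures the level set with $u$ at the center. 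We therefore do not quote Theorem~\ref{thm:g_weakLp} verbatim. Instead we rerun its covering proof with the modified measure $\tilde\nu(\d y\,\d\rho) = \rho^{\gamma-1}\,\frac{u(B(y,2^{k+1}\rho)\cap\Om)}{(2^k\rho)^d}\,\d y\,\d\rho$. After the substitution $\lambda\to 2^{kb}\lambda$ and collecting the powers of $2^k$, this measure dominates the above expression up to the factor $2^{k\gamma}$. (A more careful route decomposes into annuli, but the ball bound suffices.)

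\emph{Step 2: the two cases.} For $\gamma>0$ we use the Vitali selection as before. The tent over a selected ball $B'=B(y',r')$ has $\tilde\nu$-mass at most $\lesssim r'^{\gamma}2^{-kd}\,u(2^{k+3}B'\cap\Om)$. The condition $T_bg>2^{kb}\lambda$ gives $r'^{\gamma}\le (2^{kb}\la)^{-p}\big(|B'|^{-1}\int_{B'\cap\Om} g\big)^p$. We then apply H\"older as in \eqref{eq:gHolder} to pass to $g^{p/q}$, followed by \eqref{Ap-2wchart_local} with exponent $q$ and $t=2^{k+3}$. This costs $t^{qd}\approx 2^{kqd}$, against the gain $2^{-kd}$ from the normalization, leaving $2^{kd(q-1)}$. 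The factors $2^{k\gamma}$ and $2^{-kbp}=2^{-k\gamma}$ cancel. This yields the claimed constant $2^{kd(q-1)}$ in $\lambda^{-p}$-form, i.e. $2^{kd(q-1)/p}$ for the quasinorm. Note that $q=p$ is allowed here, since $(u,w)\in A_q\subset$ the pair class for $A_p$ via H\"older.

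For $\gamma<-qd$ we use the reverse Vitali selection and the dyadic layers $S_n(B_j)$. Layer $n$ contributes $(2^n r_j)^{\gamma}2^{-kd}\,u(2^{k+n+4}B_j\cap\Om)$. Applying \eqref{Ap-2wchart_local-q} with $t=2^{k+n+4}$ gives a factor $2^{(k+n)qd}$. The result is $2^{n(\gamma+qd)}\,2^{kd(q-1)}$, which is summable in $n$ since $\gamma<-qd$. We conclude by the disjointness of the $B_j$.

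\emph{Main obstacle.} The delicate step is Step 1: justifying that replacing the spherical average of $u$ at radius $2^k\rho$ by the ball average $u(B(y,2^{k+1}\rho))/(2^k\rho)^d$ is legitimate. The spherical average is not pointwise bounded by the ball average, so we must integrate in $\rho$ first. For fixed $y$, the set of admissible $\rho$ (those with $(y,\rho)$ in the covering tents) is an interval type set, of the form $\rho<2r'$ or $\rho>r_j/2$. Integrating the spherical shells over such a set produces exactly $u$ of an annulus or ball, with weight comparable to $\rho^{\gamma-d}$ at the relevant scales. So we will perform the covering argument directly on $x$-balls: the set of $(x,y)$ covered by a tent over $B'$ is contained in $\{y\in B',\ |x-y|\le 2^{k+1}r'\}$ for $\gamma>0$, and in the dyadic shells $|x-y|\approx 2^{k+n}r_j$ for $\gamma<0$. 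This makes the estimate rigorous without the spherical-average issue.
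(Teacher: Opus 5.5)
\textbf{There is a genuine gap in the case $\ga>0$; the case $\ga<-qd$ is fine.}

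Your identity $T^k_b g(x,y)=2^{-kb}\,T_bg(y,2^{-k}|x-y|)$ is correct. Covering in the $(y,\rho)$ variables and pulling back gives essentially the same sets as the paper's covering by the balls $B(y,2^{-k}|y-x|)$. For $\ga<-qd$ your layers $|x-y|\approx 2^{k+n}r_j$, $|y-c_{B_j}|\lesssim 2^n r_j$ are the paper's $\fS_{n,k}(B_j)$, and your bookkeeping there is right.

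The gap is your single-scale bound $\tilde\nu\big((5B'\cap\Om)\times(0,4r']\big)\lesssim r'^{\ga}2^{-kd}u(2^{k+3}B'\cap\Om)$ for $\ga>0$. Through your domination by $2^{k\ga}\tilde\nu$, it would give $\mu^u_\ga\big(\{(x,y): y\in 5B',\ |x-y|\le 2^{k+2}r'\}\big)\lesssim r'^{\ga}2^{k(\ga-d)}u(2^{k+3}B'\cap\Om)$. This is false for $0<\ga<d$, and off by a factor $\approx k$ at $\ga=d$.

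The reason is that $|x-y|^{\ga-d}$, integrated over $y\in 5B'$, gives weight $\approx r'^{\ga}$, not $r'^{\ga}2^{k(\ga-d)}$, to points $x$ near $B'$. Concretely, take $\Om=\R^d$ and $u=\chi_{4B'}$. Then the $B'\times B'$ part alone has measure $\gtrsim r'^{\ga+d}$, while your bound is $\approx 2^{-k(d-\ga)}r'^{\ga+d}$. The same example breaks the claimed $\tilde\nu$-bound directly, through the range $\rho<2^{-k}r'$. Your closing remark that the $\rho$-integration produces $u$ of a ball ``with weight comparable to $\rho^{\ga-d}$ at the relevant scales'' is exactly where this fails. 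For $\ga<d$ every scale between $r'$ and $2^k r'$ contributes, and the small scales weigh the most.

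What is true is
\[
\mu^u_\ga\big(\{(x,y): y\in 5B',\ |x-y|\le 2^{k+2}r'\}\big)\lesssim r'^{\ga}\sum_{n=0}^{k+3}2^{n(\ga-d)}\,u(2^nB'\cap\Om).
\]
To conclude from this you must apply \eqref{Ap-2wchart_local} with exponent $q$ (after H\"older) separately at each dilation $t=2^n$, not once with $t=2^{k+3}$. Combined with the stopping condition $r'^{\ga}\lesssim 2^{-k\ga}\la^{-p}\big(|B'|^{-1}\int_{B'\cap\Om}g\big)^p$, this gives the factor $2^{-k\ga}\sum_{n\le k+3}2^{n(\ga+d(q-1))}$. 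Because $\ga+d(q-1)>0$, this sum is dominated by its top term, which yields the claimed $2^{kd(q-1)}$.

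This annular decomposition is exactly the content of the paper's $\ga>0$ argument. It uses $\int_{3B'}|x-y|^{\ga-d}\,\d y\lesssim r_{B'}^{\ga}(1+|x-c_{B'}|/r_{B'})^{\ga-d}$ and applies the $A_q$ condition scale by scale, so your parenthetical that the annuli can be skipped is not right. Once you supply this step, your argument becomes essentially the paper's.

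A minor point: after the Vitali enlargement, the pulled-back set has $y\in 5B'$, not $y\in B'$.
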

	\begin{proof} 
		Fix $k \geq 2$. For any $x ,y \in \Om$, let $B^k_{x,y} := B(y, 2^{-k} |y - x|)$. 
		For $\lambda > 0$, let 
		\[
		E_k(\lambda) = \big\{(x,y) \in \Om \times \Om \colon x \ne y, T^k_{\gamma/\tempp} g(x,y)>
		\lambda \big \}.
		\]
		We can rewrite this as
		\[
		E_k(\lambda) = \Big\{(x,y) \in \Om \times \Om \colon x \ne y,  2^{k \gamma} \mathrm{rad}(B^k_{x,y})^{\gamma} < \Big( \frac{c_d}{\lambda} \frac{1}{|B^k_{x,y}|} \int_{B^k_{x,y} \cap \Om} g(z) \d z \Big)^\tempp \Big \}
		\]
		where $c_d$ is the volume of the unit ball in $\R^d$. 			
		
		\noi{\it The case $\gamma > 0$.}  Let $\mathcal{I}$ be the collection of 
		balls $B=B^k_{x,y}$ for which  $(x,y) \in E_k(\lambda)$. 
		
		Arguing as in the proof of Theorem \ref{thm:g_weakLp} (this time using a Whitney decomposition of 
		$\cU=\{(x,y)\in \Omega\times\Omega:x\neq y\}$ and  the local integrability of $(x,y)\mapsto |x-y|^{\gamma-d}$ away from the diagonal)  it suffices to show for any  compact set $\cK\subset E_k(\lambda)$ that 
		\begin{equation}\label{eq:K2-negative-ga-estimate} \mu^u_\gamma(\cK)\lc_{d,\ga,p,q} \la^{-p} [u,w]_{A_q(\Om)} \int_{\Om} |g(x)|^p w(x) \, \d x.
		\end{equation}
		Now $(x,y) \in 2^{k+1} B^k_{x,y} \times B^k_{x,y}$, and so if  $(x,y) \in E_k(\lambda)$, then $(x,y) \in ((2^{k+1} B )\cap \Om) \times (B \cap \Om)$ for some $B \in \mathcal{I}$. By the  compactness of $\cK$ there exists a finite collection  $\sC_{\mathrm{fin}}\subset \cI$ such that the open sets $(2^{k+1} B\times B)$ cover $\cK$ and since $\cK\subset \Omega\times \Omega$ we have 
		\[ 
		\cK \subset \bigcup_{B \in \sC_{\mathrm{fin}}} (2^{k+1} B \cap \Om) \times (B \cap \Om). \]            
		By Vitali covering, there exists a disjoint subcollection $\mathcal{I}'$ of $ \sC_{\mathrm{fin}}$  such that every $B \in \sC_{\mathrm {fin}}$ is contained in $3 B'$ for some $B' \in \mathcal{I}'$. Hence
		\[
		\cK
		\subset \bigcup_{B' \in \mathcal{I}'} (6 \cdot 2^k B' \cap \Om) \times 3 B'.
		\] 
		Now for $B' \in \cI'$, we estimate
		\begin{align*}
			\mu_\gamma^u((6 \cdot 2^k B' \cap \Om) \times 3 B' )
			&= \int_{(6 \cdot 2^k B') \cap \Om} u(x) \int_{3 B'} |x-y|^{\gamma - d} \d y \, \d x.
		\end{align*}
		Note that since $\gamma > 0$, for any $x \in \R^d$, 
		\[
		\int_{3 B'} |x-y|^{\gamma - d} \d y \lc_{d,\ga} r_{B'}^{\gamma} \Big( 1 + \frac{|x-c_{B'}|}{r_{B'}} \Big)^{\gamma-d},
		\]
		where $c_{B'}$ is the center of $B'$ and $r_{B'}$ is the radius of $B'$; this can be verified by splitting into the case $x \in 6 B'$ and $x \notin 6 B'$ respectively, and the case $x \in 6 B'$ requires using the condition $\gamma > 0$. As a result,
		\begin{align*}
			\mu_\gamma^u((6 \cdot 2^k B' \cap \Om) \times (3 B' \cap \Om))
			&\lc_{d,\ga} r_{B'}^{\gamma} \int_{(6 \cdot 2^k B') \cap \Om} u(x) \Big( 1 + \frac{|x-c_{B'}|}{r_{B'}} \Big)^{\gamma-d} \d x. 
		\end{align*}
		We decompose
		\[
		6 \cdot 2^k B' \subseteq B' \cup \bigcup_{n=1}^{k+3} (2^n B' \setminus 2^{n-1} B')
		\]
		and use 
		\[
		1 + \frac{|x-c_{B'}|}{r_{B'}} 
		\approx
		\begin{cases}
			1 &\quad \text{if $x \in B'$}\\
			2^n & \quad \text{if $x \in (2^n B') \setminus (2^{n-1} B')$}
		\end{cases}.
		\]
		Since $B'\in \cI$ we have  
		\[
		r_{B'}^{\gamma} \lc_{d,p} 2^{-k \gamma} \lambda^{-\tempp} \Big( \frac{1}{|B'|} \int_{B' \cap \Om} g(x) \d x \Big)^\tempp,
		\]
		and therefore
		\begin{equation*} \label{eq:sum_n}
			\begin{split}
				& \quad r_{B'}^{\gamma} \int_{(6 \cdot 2^k B') \cap \Om} u(x) \Big( 1 + \frac{|x-c_{B'}|}{r_{B'}} \Big)^{\gamma-d} \d x \\
				&\lc_{d,p}  2^{-k \gamma} \lambda^{-\tempp} \Big( \frac{1}{|B'|} \int_{B' \cap \Om} g(x) \d x \Big)^\tempp \sum_{n=0}^{k+3} 2^{n(\gamma-d)}  u((2^n B') \cap \Om)
				\\
				&\lc_{d,p}  2^{-k \gamma} \lambda^{-\tempp} \Big( \frac{1}{|B'|} \int_{B' \cap \Om} g(x)^{p/q} \d x \Big)^q\sum_{n=0}^{k+3} 2^{n(\gamma-d)}  u((2^n B') \cap \Om),
			\end{split}
		\end{equation*}
		with  H\"older's inequality \eqref{eq:gHolder}.
		Using \eqref{Ap-2wchart_local} as in 
		\eqref{Ap-2wchart_local-q}, this shows that
		\begin{align*}
			\mu_\gamma^u((6 \cdot 2^k B' \cap \Om) \times (3 B' \cap \Om))\lc_{d,\ga,p} 
			& \, 2^{-k\gamma} \lambda^{-\tempp} [u,w]_{A_\tempq(\Om)} \sum_{n=0}^{k+3} 2^{n(\gamma+d (\tempq-1))} \int_{B' \cap \Om} g(x)^\tempp w(x) \d x \\
			\lc_{d,\ga,p,q} & \, \lambda^{-\tempp}  2^{k d (\tempq-1)} [u,w]_{A_\tempq(\Om)} \int_{B' \cap \Om} g(x)^\tempp w(x) \d x.
		\end{align*}
		Summing over the disjoint $B' \in \cI'$ we obtain
		\[
		\mu_{\gamma}^u (\cK) \lc_{d,\ga,p,q} \lambda^{-\tempp}  2^{k d (\tempq-1)} [u,w]_{A_\tempq(\Om)} \int_\Om g(x)^\tempp w(x) \d x
		\]
		Taking the supremum over compact $\cK \subset E_k(\lambda)$ gives the desired estimate for $\gamma>0$.

		\noi{\it The case $\gamma<-\tempq d$.} %
		Fix $k \geq 2$. Let
		\[
		E_k(\lambda) = \Big\{ (x,y) \in \Om \times \Om \colon x \ne y, \frac{2^{kd}}{|y-x|^{d+\frac{\gamma}{\tempp}}} \int_{B(y,2^{-k}|y-x|) \cap \Om} g(x) \d x > \lambda \Big\}.
		\]
		Let $\cI_k$ be the collection of open balls $B(y,2^{-k}|y-x|)$ for which $(x,y) \in E_k(\lambda)$. We denote by $c_B$ the center of the ball $B$ and by $r_B$ its radius.
		Then
		\begin{equation}\label{eq:stopk}
			r_B^{\gamma} < \lambda^{-\tempp} 2^{-k \gamma} \Big( \frac{1}{r_B^d} \int_{B \cap \Om} g(x) \d x \Big)^\tempp
		\end{equation}
		for every $B \in \cI_k$, and
		\[
		E_k(\lambda) \subset \bigcup_{B \in \cI_k} (\Om \setminus 2^{k-1} \overline{B}) \times B.
		\]
		For any compact subset $\cK$ of $E_k(\lambda)$, there exists a finite subcollection $\cC_\fin \subset \cI_k$ such that \[ \cK \subset \bigcup_{B \in \cC_\fin} Z_{k,B} \text{ where } Z_{k,B}:= (\Om \setminus 2^{k-1} \overline{B}) \times B.\] As in the proof of Theorem \ref{thm:g_weakLp}  we  use a reverse Vitali selection procedure, selecting a disjoint collection $B_1, \dots, B_N$ from $\cC_\fin$ such that for every $B \in \cC_\fin$, there exists $j \in \{1, \dots, N\}$ for which $B \cap B_j \ne \emptyset$, and $r_B \geq r_{B_j}$. Write
		\[
		\fS_k(B_j) = \big\{(x,y) \in \Om \times \R^d \colon |y-c_{B_j}| \leq 3 \cdot 2^{-(k-2)} |x-y| \text{ and } 2^{-(k-2)} |x-y| \geq r_{B_j}\big\}.
		\]
		If $B \in \cC_\fin$ and $B \cap B_j \ne \emptyset$ with $r_B \ge r_{B_j}$, we claim that $Z_B \subset \fS_k(B_j)$. Indeed, for any $(x,y) \in Z_B$, one has 
		\[
		2^{k-1} r_B < |x-c_B| \leq |x-y| + |y-c_B| \leq |x-y| + r_B
		\]
		which implies, since $k \geq 2$, that $$|x-y| > (2^{k-1} - 1) r_B \geq 2^{k-2} r_B.$$ If $r_B \geq r_{B_j}$, then this implies $|x-y| \geq 2^{k-2} r_{B_j}$, i.e. $2^{-(k-2)} |x-y| \geq r_{B_j}$; also, if we pick a point $z \in B \cap B_j \ne \emptyset$, then %
		\[
		|y-c_{B_j}| \leq |y-z| + |z-c_{B_j}| \leq 2r_B + r_{B_j} \leq 3 r_B \leq 3 \cdot 2^{-(k-2)} |x-y|,
		\]
		proving that $Z_{k,B} \subset \fS_k(B_j)$. It follows that $
		\cK \subset \bigcup_{j=1}^N \fS_k(B_j).
		$ 
		
		We note that for $(x,y)\in \fS_k(B_j)$ we have $|x-y|\ge 2^{k-2}r_{B_j} $. 
		Let 
		\[
		\fS_{0,k}(B_j)=\{(x,y): 2^{k-2} r_{B_j}\le |x-y|\le 2^{k-1} r_{B_j},\, |y-c_{B_j}| \leq 3 \cdot 2^{-(k-2)} |x-y|\} 
		\] and for $n=1,2,\dots$
		\[ \fS_{n,k}(B_j)=\{(x,y): 2^{n+k-2} r_{B_j}\le |x-y|\le 2^{n+k-1} r_{B_j}, 
		|y-c_{B_j}|\le 3\cdot 2^{-(k-2)} |x-y|\} 
		\]
		Then
		\[\fS_k(B_j)\subset \bigcup_{n=0}^\infty \fS_{n,k} (B_j).\]
		
		We estimate $\mu_\gamma^u(\fS_{0,k})$.
		Note that $(x,y) \in \fS_{0,k}$ implies $2^{k-2} r_{B_j}\le |x-y|\le 2^{k-1} r_{B_j}$ and \[|y-c_{B_j}|\le 3\cdot 2^{-(k-2)} |x-y|,\] which implies $|x-c_{B_j}| \le (2^{k-1}+6) r_{B_j} \le 2^{k+1} r_{B_j}$ (since 
		$2^{k+1} - 2^{k-1} = 3 \cdot 2^{k-1} \ge 
		6$ for $k\ge 2$). In other words, $x \in 2^{k+1} B_j$. Also, $|y-c_{B_j}| \leq 3 \cdot 2^{-(k-2)} 2^{k-1} r_{B_j} \leq 6 r_{B_j}$.
		As a result, \[
		\begin{split}
			\mu_{\gamma}^u(\fS_{0,k}(B_j)) 
			&\leq \int\limits_{(2^{k+1}  B_j) \cap \Om} u(x) \int\limits_{\substack{y \in \R^d \colon  |y-c_{B_j}|\le 6 r_{B_j}\\ 2^{k-2}r_{B_j}\le |x-y| \le  2^{k-1} r_{B_j}}} |x-y|^{\gamma - d} \d y \, \d x \\
			&\lc _{d,\ga}  u((2^{k+1}   B_j) \cap \Om) (2^k r_{B_j})^{\gamma-d} r_{B_j}^d\\
			&\lc_{d,\ga,p} 2^{-kd} \lambda^{-\tempp} \Big( \frac{1}{|B_j|} \int_{B_j \cap \Om} g(x) \d x \Big)^\tempp u((2^{k+1}  B_j) \cap \Om) \\
		\end{split}
		\]
		where we have used \eqref{eq:stopk} in the last inequality.  
		From  H\"older's inequality and again \eqref{Ap-2wchart_local} for $q$ instead of $p$ 
		\[\Big( \frac{1}{|B_j|} \int_{B_j \cap \Om} g(x) \d x \Big)^\tempp u((2^{k+1}  B_j) \cap \Om) \lc_{d, q} 2^{kdq} \int_{B_j \cap \Om} g(y)^p w(y) \d y \] and hence summing over the disjoint $B_j$'s we obtain
		\[\sum_{j=1}^N \mu_{\gamma}^u(\fS_{0,k}(B_j)) 
		\lc_{d,\ga,p} 2^{kd (q-1)}  \la^{-p} [u,w]_{A_q(\Om)}\int_\Om g(y)^p w(y) \d y.\]
		
		Similarly for  $n\ge 1$ we observe that
		if  $2^{n+k-2} r_{B_j}\le |x-y|\le 2^{n+k-1} r_{B_j}$ and $|y-c_{B_j}|\le 3\cdot 2^{-(k-2)} |x-y|$ then $|x-c_{B_j}|\le (2^{n+k-1} + 6) r_{B_j} \leq 2^{n+k} r_{B_j}$ and also $|y-c_{B_j}|\le 6 \cdot 2^{n} r_{B_j}.$ Hence, 
		\[
		\begin{split}
			\mu_{\gamma}^u(\fS_{n,k}(B_j)) 
			&\leq \int\limits_{(2^{n+k}  B_j) \cap \Om} u(x) \int\limits_{\substack{y \in \R^d \colon  |y-c_{B_j}|\le 6 \cdot 2^{n} r_{B_j}\\ 2^{n+k-2}r_{B_j}\le |x-y| \le  2^{n+k-1} r_{B_j}}} |x-y|^{\gamma - d} \d y \, \d x \\
			&\lc _{d,\ga}  u((2^{n+k}   B_j) \cap \Om) (2^{n+k} r_{B_j})^{\gamma-d} (2^{n} r_{B_j})^d\\
			&\lc_{d,\ga,p} 2^{n\gamma} 2^{-kd} u((2^{n+k}   B_j) \cap \Om) 
			(2^kr_{B_j})^\gamma
			\\
			&\lc_{d,\ga,p} 2^{n\gamma} 2^{-kd}  u((2^{n+k}   B_j)\cap\Om) 
			\lambda^{-\tempp} \Big( \frac{1}{|B_j|} \int_{B_j \cap \Om} g(x) \d x \Big)^\tempp 
		\end{split}
		\] by  \eqref{eq:stopk}. 
		By H\"older's inequality and \eqref{Ap-2wchart_local},
		\[ u((2^{n+k}   B_j)\cap\Om) 
		\Big( \frac{1}{|B_j|} \int_{B_j \cap \Om} g(x) \d x \Big)^\tempp \leq 2^{(n+k)dq} [u,w]_{A_q(\Om)} \int_{B_j \cap \Omega} g(y)^p w(y)\d y\]
		and therefore 
		\begin{align*}\mu_{\gamma}^u(\fS_{n,k}(B_j)) 
			&\lc_{d,\ga,p,q} 2^{n(dq+\gamma)}  2^{kd (q-1)}\la^{-p} [u,w]_{A_q(\Om)}\int_{B_j \cap \Omega} g(y)^p w(y) \d y. \qedhere
		\end{align*}
		Since $\gamma<-dq$ we may sum in $n$ and then sum over $j = 1, \dots, N$ using disjointness of the $B_j$'s. We then obtain the desired bound for $\mu^u_{\ga}(\cK)$.
	\end{proof}

	\section{Proof of Theorem \ref{thm:main}} \label{sec:proof}

	In this proof we shall now have to assume that $\Omega$ is a convex domain of bounded ball deviation. 
	As a replacement of the fundamental theorem of calculus in one dimension  we shall use a ``subrepresentation formula" for these  domains 
	(see e.g. Lemma 7.16 of \cite{03561752} or more generally Section 8.1 from \cite{AdamsHedberg}, or the proof of equation (4.5) in \cite{Juha}) which is well known to  lead to Poincaré-type inequalities. 
	We refer the reader to \cite{DreDu2008} for extensions and applications of the classical Martio--Reshetnyak pointwise estimate for John domains, which is proven in detail in \cite{Hajlasz2001}.  We also note that there  are  improved versions where  the gradient is replaced by appropriate fractional difference quotients, with the Bourgain--Brezis--Mironescu factor $(1-s)$ in front. These can be found in \cites{HMP2025a, HMP2025b}, with applications to pointwise bounds for maximal rough singular integrals.      For more about Poincar\'e inequalities on metric measure spaces see \cite{heinonenetalbook}.

	For the convenience of the reader we include a proof which tracks the dependency on the ball deviation of $\Om$.

	\begin{lemma}\label{lem:poincare}
		Let $\Omega\subset \bbR^d$ be a convex domain with bounded ball deviation. Let $x,y\in \Omega$ with $x \ne y$ and $\cB_{x,y}$ be the ball in $\bbR^d$ centered at $\frac{x+y}{2}$ with radius $r_{x,y}=\frac{|x-y|}2$ and $\cR_{x,y}=\cB_{x,y}\cap\Omega$.  Let $\varpi_d$ be the  surface measure of the unit sphere in $\bbR^d$. Then for $f\in C^1(\Omega)$  
		\begin{equation} \label{eq:Poincare1} \Big |f(x)-\av_{\cR_{x,y}} f\Big | \leq \frac{2^d\dev(\Om)}{\varpi_d} %
			\int_{\cR_{x,y}} \frac{|\nabla f(z)|}{|x - z|^{d-1}} \d z.
		\end{equation}
		and 
		\begin{equation} \label{eq:Poincare2}
			|f(x)-f(y) |\le  \frac{2^d\dev(\Om)}{\varpi_d} %
			\Big( \int_{\cR_{x,y}} \frac{|\nabla f(z)|}{|x - z|^{d-1}} \d z + \int_{\cR_{x,y}} \frac{|\nabla f(z)|}{|y - z|^{d-1}} \d z\Big). 
		\end{equation} 
	\end{lemma}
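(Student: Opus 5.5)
We follow the classical proof of Lemma 7.16 in \cite{03561752}, applied to the convex set $\cR_{x,y}$ and keeping track of constants. Since $\Om$ and $\cB_{x,y}$ are both convex, $\cR_{x,y}$ is convex. It contains $x$ and $y$ in its closure, and it is open, so it has positive measure. Only \eqref{eq:Poincare1} needs real work. Indeed, applying \eqref{eq:Poincare1} with $x$ and also with $y$ in place of $x$ (note $\cR_{x,y}=\cR_{y,x}$), the triangle inequality $|f(x)-f(y)|\le |f(x)-\av_{\cR_{x,y}}f|+|f(y)-\av_{\cR_{x,y}}f|$ gives \eqref{eq:Poincare2}.

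For \eqref{eq:Poincare1}, fix $z\in\cR_{x,y}$. By convexity the segment from $x$ to $z$ lies in $\overline{\cR_{x,y}}$. Its points other than possibly $x$ itself lie in $\Om$, since $x\in\Om$ and $\Om$ is open and convex. The fundamental theorem of calculus, with $\omega=(z-x)/|z-x|$, gives
\[
f(x)-f(z)=-\int_0^{|z-x|}\partial_\omega f(x+t\omega)\,\d t .
\]
We average over $z\in\cR_{x,y}$. Writing $V(s)=|s-x|^{-1}$ temporarily, and extending $|\nabla f|$ by zero outside $\cR_{x,y}$ (call this $G$), we get
\[
|f(x)-\av_{\cR_{x,y}}f|\le \frac{1}{|\cR_{x,y}|}\int_{\cR_{x,y}}\int_0^{|z-x|} G(x+t\omega)\,\d t\,\d z .
\]
Convexity ensures that every point $x+t\omega$ with $t<|z-x|$ lies in $\cR_{x,y}$, so nothing is lost by extending by zero.

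Next we pass to polar coordinates about $x$, with $z=x+\rho\omega$. The set $\cR_{x,y}$ lies in $B(x,2r_{x,y})=B(x,|x-y|)$, so $\rho\le |x-y|=:D$. Bounding the $\rho$-integral by $\int_0^{D}\rho^{d-1}\,\d\rho=D^d/d$ and dropping the cutoff, the double integral is at most
\[
\frac{D^d}{d}\int_{\bbS^{d-1}}\int_0^{\infty}G(x+t\omega)\,\d t\,\d\omega=\frac{D^d}{d}\int_{\cR_{x,y}}\frac{|\nabla f(z)|}{|x-z|^{d-1}}\,\d z .
\]
So the constant in front is $D^d/(d\,|\cR_{x,y}|)$.

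It remains to bound $|\cR_{x,y}|$ from below using the ball deviation. This is the one delicate step, because $\dev(\Om)$ is defined through balls centered at points of $\Om$ with radius at most $\diam(\Om)$. The midpoint $m=\frac{x+y}{2}$ lies in $\Om$ by convexity, and $r_{x,y}=|x-y|/2\le \diam(\Om)$. Hence
\[
|\cR_{x,y}|=|B(m,r_{x,y})\cap\Om|\ge \frac{|B(m,r_{x,y})|}{\dev(\Om)}=\frac{\varpi_d\, r_{x,y}^d}{d\,\dev(\Om)},
\]
using $|B(0,1)|=\varpi_d/d$. Combining, the constant is
\[
\frac{D^d}{d}\cdot\frac{d\,\dev(\Om)}{\varpi_d (D/2)^d}=\frac{2^d\dev(\Om)}{\varpi_d},
\]
which is exactly the constant in \eqref{eq:Poincare1}.

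The main thing to verify carefully is the justification of the fundamental theorem of calculus along the segment. This is fine because $f\in C^1(\Om)$ and the open segment lies in $\Om$. We also need the integrals to be finite, or else the inequality is trivial when the right-hand side is infinite. The identity in polar coordinates is exact by Fubini for the nonnegative function $G$.
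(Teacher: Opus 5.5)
Your proof is correct and is essentially the paper's argument. Both use the fundamental theorem of calculus along rays from $x$ inside the convex set $\cR_{x,y}$, averaging in polar coordinates about $x$ with the radial factor bounded by $|x-y|^d/d$, the lower bound $|\cR_{x,y}|\ge |\cB_{x,y}|/\dev(\Om)$, and then the triangle inequality for \eqref{eq:Poincare2}; your explicit check that the midpoint lies in $\Om$ and that $r_{x,y}\le\diam(\Om)$, so that the definition of $\dev(\Om)$ actually applies, is a detail the paper leaves implicit.
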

	\begin{proof}
		To prove \eqref{eq:Poincare1} we fix $x$ and $y$ and let, for $\omega\in \bbS^{d-1}$, 
		$r_{\omega} = \sup\{r \geq 0 \colon x + r \omega \in \cR_{x,y}\}$. Then we have $r_{\omega} \leq |x-y| = \text{diam}(\cR_{x,y})$, and for every $s \in [0,r_{\omega})$, we have
		\begin{align*}
			|f(x) - f(x + s \omega)| &= \Big| \int_0^s \omega \cdot \nabla f(x + t \omega) \d t \Big| \\
			&\leq \int_0^{r_{\omega}} |\nabla f(x + t \omega) |\d t\,.
		\end{align*}
		Hence 
		\[
		\begin{split}
			|f(x) - \av_{\cR_{x,y}} f| &= \Big| f(x) - \frac{1}{|\cR_{x,y}|} \int_{\mathbb{S}^{d-1}} \int_0^{r_{\omega}} f(x + s \omega) s^{d-1} \d s \d \omega \Big| \\
			&\leq \frac{1}{|\cR_{x,y}|} \int_{\mathbb{S}^{d-1}} \int_0^{r_{\omega}} |f(x) - f(x + s \omega)| s^{d-1} \d s \d \omega \\
			&\leq \frac{1}{|\cR_{x,y}|} \int_{\mathbb{S}^{d-1}} \int_0^{r_{\omega}} \int_0^{r_{\omega}} |\nabla f(x + t \omega) |\d t \, s^{d-1} \d s \, \d \omega \\
			&= \frac{1}{|\cR_{x,y}|} \int_{\mathbb{S}^{d-1}} \frac{r_{\omega}^d}{d} \int_0^{r_{\omega}} |\nabla f(x + t \omega) |\d t \, \d \omega \\
			& \leq \frac{\text{diam}(\cR_{x,y})^d }{d |\cR_{x,y}|} \int_{\mathbb{S}^{d-1}} \int_0^{r_{\omega}} |\nabla f(x + t \omega) |\d t \, \d \omega. 
		\end{split}
		\]
		Since 
		\[ \int_{\cR_{x,y}} \frac{|\nabla f(z)|}{|z-x|^{d-1}} \d z= \int_{\mathbb{S}^{d-1}}\int_0^{r_{\omega}} |\nabla f(x + t \omega) |\d t \, \d \omega\] and since $\varpi_d/d$ is the $d$-dimensional volume of the unit ball,
		\[ \frac{\text{diam}(\cR_{x,y})^d }{d |\cR_{x,y}|}= \frac{2^d   \rad (\cB_{x,y})^d} {d|\cB_{x,y}\cap\Omega|}= \frac{2^d|\cB_{x,y}|} {\varpi_d |\cB_{x,y}\cap\Omega|} \le \frac{2^d \dev(\Om)}{\varpi_d}\] we obtain \eqref{eq:Poincare1}.
		
		Since $\cR_{x,y}=\cR_{y,x}$ we obtain from \eqref{eq:Poincare1}
		\begin{align*} 
			|f(x)-f(y)|&\le |f(x)-\av_{\cR_{x,y}} f|+ |f(y)-\av_{\cR_{y,x}} f|
			\\
			&\le \frac{2^d\dev(\Om)}{\varpi_d} %
			\Big( \int_{\cR_{x,y}} \frac{|\nabla f(z)|}{|x - z|^{d-1}} \d z + \int_{\cR_{x,y}} \frac{|\nabla f(z)|}{|y - z|^{d-1}} \d z\Big) 
		\end{align*} 
		which is \eqref{eq:Poincare2}. %
	\end{proof}
	
	\begin{lemma}\label{lem:diffqbound}
		Let $\Omega\subset \bbR^d$ be a convex domain with bounded ball deviation. Let $f \in C^1(\Om)$ and $x,y\in \Omega$. With $T_b$ as in \eqref{eq:Tgammadef} and $T^k_b$ as in \eqref{eq:Skgammadef}  we have \begin{multline} \label{eq:Qbound}
			\diff_{\gamma/\tempp}f(x,y) \lc_{d,\gamma,p} \dev(\Om)  \Big( \sum_{k \geq 2} 2^{-k} T_{\gamma/\tempp}^k |\nabla f|(x,y) + \sum_{k \geq 0} 2^{-k} 2^{-k \frac{\gamma}{\tempp}} T_{{\gamma}/{\tempp}} |\nabla f|(x,2^{-k}|y-x|)  \Big).
		\end{multline}
	\end{lemma}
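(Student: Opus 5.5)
Starting from \eqref{eq:Poincare2} of Lemma \ref{lem:poincare}, we decompose each of the two Riesz-type integrals over $\cR_{x,y}$ dyadically, according to the distance of $z$ from $x$, respectively from $y$. Put $\rho=|x-y|$. Every $z\in\cR_{x,y}$ satisfies $|x-z|\le\rho$ and $|y-z|\le\rho$. This gives
\[
\int_{\cR_{x,y}}\frac{|\nabla f(z)|}{|x-z|^{d-1}}\,\d z\le \sum_{k\ge0}\big(2^{-k-1}\rho\big)^{1-d}\int_{B(x,2^{-k}\rho)\cap\Om}|\nabla f(z)|\,\d z,
\]
where the $k$-th term comes from the shell $2^{-k-1}\rho<|x-z|\le 2^{-k}\rho$, enlarged to the full ball. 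The shells account for all of $\cR_{x,y}$ except the single point $z=x$, which has measure zero. Dividing by $|x-y|^{1+\gamma/p}=\rho^{1+\gamma/p}$, the $k$-th term becomes
\[
2^{(k+1)(d-1)}\rho^{-d-\gamma/p}\int_{B(x,2^{-k}\rho)\cap\Om}|\nabla f| = 2^{d-1}\,2^{-k}\,2^{-k\gamma/p}\,T_{\gamma/p}|\nabla f|(x,2^{-k}\rho),
\]
because $(2^{-k}\rho)^{d+\gamma/p}=2^{-kd}2^{-k\gamma/p}\rho^{d+\gamma/p}$. So the contribution of the $x$-integral is exactly the second sum in \eqref{eq:Qbound}.

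For the integral centred at $y$ we decompose the same way, but only for small scales $k\ge2$. The shell $\{2^{-k-1}\rho<|y-z|\le2^{-k}\rho\}$ is contained in $B(y,2^{-k}|y-x|)$, which is the ball appearing in $T^k$. Dividing by $\rho^{1+\gamma/p}$, the $k$-th term is bounded by
\[
2^{(k+1)(d-1)}\rho^{-d-\gamma/p}\int_{B(y,2^{-k}\rho)\cap\Om}|\nabla f| \approx 2^{-k}\,T^k_{\gamma/p}|\nabla f|(x,y),
\]
since $T^k_b$ carries the prefactor $2^{kd}\rho^{-d-b}$. This produces the first sum in \eqref{eq:Qbound}.

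The remaining region is $\{z\in\cR_{x,y}:|y-z|>\rho/4\}$, the two coarse shells $k=0,1$. There we do not use balls around $y$. Instead, $|y-z|^{1-d}\le 4^{d-1}\rho^{1-d}$ on this region, and $\cR_{x,y}\subset B(x,\rho)\cap\Om$, so the piece is at most
\[
4^{d-1}\rho^{1-d}\int_{B(x,\rho)\cap\Om}|\nabla f|,
\]
which after division by $\rho^{1+\gamma/p}$ is a constant times $T_{\gamma/p}|\nabla f|(x,\rho)$, that is, the $k=0$ term of the second sum.

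Finally we multiply everything by the factor $2^d\dev(\Om)/\varpi_d$ from \eqref{eq:Poincare2}. The only point requiring care is this coarse-scale part of the $y$-integral. The operators $T^k$ are defined only for $k\ge2$, which keeps the $x$-variable well separated from the ball around $y$, so the scales $k=0,1$ must be absorbed into the $x$-centred terms as described; with that done, the rest is bookkeeping of the powers $2^{k(d-1)}$. All constants depend only on $d$, and for $\gamma>0$ also trivially on the sign conventions, so the implicit constant is as claimed. Convergence of the sums is not an issue, since the inequality is between nonnegative quantities.
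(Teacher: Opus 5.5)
Your proof is correct and follows the paper's argument: you dyadically decompose both Riesz-type integrals in \eqref{eq:Poincare2}, let the $x$-centred shells produce the terms $2^{-k}2^{-k\gamma/p}T_{\gamma/p}|\nabla f|(x,2^{-k}|x-y|)$, let the small $y$-centred shells ($k\ge 2$) produce $2^{-k}T^k_{\gamma/p}|\nabla f|(x,y)$, and absorb the coarse $y$-centred shells into $T_{\gamma/p}|\nabla f|(x,|x-y|)$ using $\cR_{x,y}\subset B(x,|x-y|)$. The only difference is cosmetic: the paper absorbs the scales $k=0,1,2$ into that coarse term, whereas you absorb only $k=0,1$.
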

	\begin{proof}
		
		We let $\cB_{x,y}$ and $\cR_{x,y}=\cB_{x,y}\cap \Omega$ be as in Lemma \ref{lem:poincare}. We dyadically decompose $\cB_{x,y}$ into the annuli
		\[ \cA^k_{x,y}= B(x, 2^{-k}|y-x|) \setminus B(x, 2^{-k-1}|y-x|) \]
		and estimate 
		\begin{align*} 
			\frac{|f(y)-f(x)|}{|y-x|} \lc_d \frac{\dev(\Om)}{|y-x|}   \sum_{k=0}^\infty 
			\Big( \int_{\cA^k_{y,x}\cap\Om}\frac{|\nabla f(z)| }{|y-z|^{d-1}} \d z + \int_{\cA^k_{x,y}\cap\cR_{x,y}}\frac{|\nabla f(z)| }{|x-z|^{d-1}} \d z\Big) 
		\end{align*} 
		and thus 
		
		\begin{equation} \label{eq:pointwise_Riesz}
			\begin{split}
				\frac{|f(y)-f(x)|}{|y-x|^{1+\frac{\gamma}{p}}} \lc_d \dev(\Om)   \Big(&\sum_{k \geq 0} 2^{-k}  \frac{2^{kd}}{|y-x|^{d+\frac{\gamma}{p}}} \int_{B(y,2^{-k}|y-x|) \cap \Om} |\nabla f(z)| \d z \\ +& \sum_{k \geq 0} 2^{-k} \frac{2^{kd} }{|y-x|^{d+\frac{\gamma}{p}}} \int_{B(x,2^{-k}|y-x|) \cap \cR_{x,y}} |\nabla f(z)| \d z\Big).
			\end{split}
		\end{equation}
		
		We obtain  with $r_k(x,y)= 2^{-k}|x-y|$, 
		\[\frac{2^{kd}}{|y-x|^{d+\frac{\gamma}{p}}} \int_{B(x,2^{-k}|y-x|) \cap \Om} |\nabla f(z)| \d z \lc_{d} 2^{-k\frac{\gamma} {p}} T_{\ga/p} [|\nabla f|] (x, r_k(x,y) ) .\]
		For $k=0,1, 2$, we bound
		\[\frac{2^{kd}}{|y-x|^{d+\frac{\gamma}{p}}}\int_{B(y,2^{-k}|y-x|) \cap \cR_{x,y}} |\nabla f(z)| \d z \lc_{d}   T_{\ga/p} [|\nabla f|] (x, |x-y| ).
		\]
		However  for larger $k$ we need to invoke the operators $T^k_{\ga/p}$ and  obtain 
		\[\frac{2^{kd}}{|y-x|^{d+\frac{\gamma}{p}}} \int_{B(y,2^{-k}|y-x|) \cap \cR_{x,y}} |\nabla f(z)| \d z \leq   T^k_{\ga/p} [|\nabla f|] (x, y),\,\, \text{ $k\ge 2$.}\]
		Combining the three previous displays with \eqref{eq:pointwise_Riesz} leads to \eqref{eq:Qbound}.
	\end{proof}

	To deal with the $k$-sums in \eqref{eq:Qbound}, we use the following lemma, which is a substitute for the failure of triangle inequality for the ${L^{p,\infty}}$ quasi-norm. Instead we could also use, for $p=1$   a summing lemma of Stein and N. Weiss \cite{SteinNWeiss} and for $p>1$ the equivalence of the seminorm to a norm \cite{Hunt-Ens}.

	\begin{lemma} \label{lem:sum_weakLp}
		Let  $1 \leq p < \infty$ and $ \eps>0$. Then there is $C=C_\eps>0$ independent of $p$ such that for any sequence of non-negative functions $G_k$ on the same measure space $(X,\mu)$, we have 
		\[
		\Big[ \sum_{k=1}^{\infty} 2^{-k\eps} G_k \Big]_{L^{p,\infty}(X,\mu)} \le C \sup_{k \geq 1} [G_k]_{L^{p,\infty}(X,\mu)}.
		\]
	\end{lemma}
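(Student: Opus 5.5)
The plan is to reduce the weighted sum to a union bound on level sets, giving each summand its own, slightly larger, share of the level. Put $G=\sum_{k\ge1}2^{-k\eps}G_k$ and $A=\sup_k[G_k]_{L^{p,\infty}(X,\mu)}$, and assume $A<\infty$, since otherwise there is nothing to prove. Fix $\la>0$. Choose weights $a_k=c_\eps 2^{-k\eps/2}$, where $c_\eps=\bigl(\sum_{k\ge1}2^{-k\eps/2}\bigr)^{-1}$, so that $\sum_k a_k=1$. If $G(x)>\la$, then $2^{-k\eps}G_k(x)>a_k\la$ for at least one $k$; otherwise summing over $k$ would give $G(x)\le\la$. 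This yields the set inclusion
\[
\{G>\la\}\subset\bigcup_{k\ge1}\{G_k>2^{k\eps}a_k\la\}=\bigcup_{k\ge1}\{G_k>c_\eps2^{k\eps/2}\la\}.
\]

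Next, apply countable subadditivity of $\mu$ together with the definition of the weak quasinorm to each piece:
\[
\la^p\mu(\{G>\la\})\le\sum_{k\ge1}\la^p\,\frac{A^p}{(c_\eps2^{k\eps/2}\la)^p}=A^p c_\eps^{-p}\sum_{k\ge1}2^{-k\eps p/2}.
\]
Since $p\ge1$, the last sum is at most $\sum_k2^{-k\eps/2}=c_\eps^{-1}$. Taking $p$-th roots gives $[G]_{L^{p,\infty}}\le A\,c_\eps^{-1}c_\eps^{-1/p}\le A\,c_\eps^{-2}$, where the last step uses $c_\eps\le1$. So $C=c_\eps^{-2}$ works, and it does not depend on $p$.

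The one delicate point is exactly this uniformity in $p$. It is why the decay $2^{-k\eps}$ is split in half: one half, $2^{-k\eps/2}$, goes into making the weights $a_k$ summable, and the other half survives as the geometric factor $2^{-k\eps p/2}$ after raising to the power $p$. That factor has to be bounded using only $p\ge1$. Measurability of $G$ is not an issue, since the $G_k$ are nonnegative measurable functions and $G$ is a countable sum of them.
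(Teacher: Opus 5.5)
Your proof is essentially the paper's: a union bound over level sets with summable weights. The paper uses weights $\frac{6}{\pi^2k^2}$ and gets uniformity in $p$ from $\sum_k x_k^p\le(\sum_k x_k)^p$, while you use $c_\eps2^{-k\eps/2}$ and $2^{-k\eps p/2}\le2^{-k\eps/2}$.

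There is one slip. Since $c_\eps^{-1}=(2^{\eps/2}-1)^{-1}<1$ when $\eps>2$, the claim $c_\eps\le1$ fails there, and $c_\eps^{-1}c_\eps^{-1/p}\le c_\eps^{-2}$ can then be false. Taking $C=c_\eps^{-1}\max\{1,c_\eps^{-1}\}$ fixes this, and that constant is still independent of $p$.
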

	
	\begin{proof} %
		For $\lambda > 0$, since $1 = \sum_{k=1}^{\infty} \frac{6}{\pi^2} \frac{1}{k^2}$, we see that the set $\{x \in X \colon \sum_{k=1}^{\infty} 2^{-k\eps} G_k(x) > \lambda \}$ is contained in the union of the sets $\{x \in X \colon G_k(x) > \frac{6}{\pi^2} \frac{\lambda 2^{k\eps}}{k^2} \}.$ As $G_k\in L^{p,\infty}$ we get 
		for all $\lambda > 0$,
		\[\mu \Big\{x \in X \colon \sum_{k=1}^{\infty} 2^{-k \varepsilon} G_k(x) > \lambda \Big\}\le 
		\sum_{k=1}^{\infty} \big( \tfrac{\pi^2 k^2} {6\cdot  2^{k\eps}} \big)^p \lambda^{-p}   [G_k]_{L^{p,\infty}(X,\mu)}^p  \]
		and the assertion follows from  $\sum_{k=1}^{\infty} (\frac{k^2}{2^{k\eps}})^p \le  (\sum_{k=1}^\infty  \frac{k^2}{2^{k\eps}})^p,$  with $C= \frac{\pi^2}{6} \sum_{k=1}^\infty k^22^{-k\eps}$. 
	\end{proof}
	
	\begin{proof}[Proof of Theorem~\ref{thm:main}]
		Assume $1 \leq \tempq \leq \tempp < \infty$ and $(u,w) \in A_\tempq(\Om)$. We use Lemma \ref{lem:diffqbound} and let 
		$\varepsilon = 1 - \frac{(\tempq-1)d}{\tempp} > 0$, so that $2^{-k} = 2^{-k \varepsilon} 2^{-k\frac{(\tempq-1)d}{\tempp}}$. We estimate the first expression on the right hand side of \eqref{eq:Qbound} using  Lemma~\ref{lem:sum_weakLp} and Theorem~\ref{thm:g_weakLp-k}
		\begin{align*}
			\Big[ \sum_{k \geq 2} 2^{-k} T_{\gamma/\tempp}^k |\nabla f|(x,y) \Big]_{L^{\tempp,\infty}(\Om \times \Om, \mu_{\gamma}^u)}
			&\lc \sup_{k \geq 2} \Big[ 2^{-k\frac{(\tempq-1)d}{\tempp}} T_{\gamma/\tempp}^k |\nabla f|(x,y) \Big]_{L^{\tempp,\infty}(\Om \times \Om, \mu_{\gamma}^u)} \\
			&\lc_{d,\ga,p,q} [u,w]_{A_\tempq(\Om)}^{1/\tempp} \|\nabla f\|_{L^\tempp(\Om,w)},
		\end{align*}
		For the second expression on the right hand side of \eqref{eq:Qbound} we use Lemma~\ref{lem:sum_weakLp} again to estimate  
		\begin{align*}
			\Big[ \sum_{k \geq 0} &2^{-k} 2^{-k\gamma/\tempp} T_{\gamma/\tempp} |\nabla f|(x,2^{-k}|x-y|) \Big]_{L^{\tempp,\infty}(\Om \times \Om, \mu_{\gamma}^u)}
			\\&\lc \sup_{k \geq 0} \big[ 2^{-k\gamma/\tempp} T_{\gamma/\tempp} |\nabla f|(x,2^{-k}|x-y|) \big]_{L^{\tempp,\infty}(\Om \times \Om, \mu_{\gamma}^u)} 
		\end{align*} which is dominated by a constant times
		\[ \sup_{k\ge 0} \big[ 2^{-k\gamma/\tempp} T_{\gamma/\tempp} |\nabla f|(x,2^{-k} r) \big]_{L^{\tempp,\infty}(\Om \times (0,\infty), \nu_{\gamma}^u)} 
		= \big[ T_{\gamma/\tempp} |\nabla f|(x,r) \big]_{L^{\tempp,\infty}(\Om \times (0,\infty), \nu_{\gamma}^u)} .
		\] 
		(The last equality is by a change of variable formula that accounts for the scaling in $r$.)
		By Theorem~\ref{thm:g_weakLp}
		the last term  is $\lc_{d,\ga,p,q} [u,w]_{A_\tempq(\Om)}^{1/\tempp} \|\nabla f\|_{L^\tempp(\Om,w)}$ (we also used $[u,w]_{A_p(\Omega)} \leq [u,w]_{A_q(\Omega)}$ in the case $\ga > 0$).  This finishes the proof of Theorem~\ref{thm:main}.
	\end{proof}

	\section{Further discussion and perspectives} \label{sect:speculations}

	\subsection{Weighted Bourgain--Brezis--Mironescu formula}
	As $s\to 1^- $ the  fractional
	$\dot W^{s,p}$ seminorms of any non-constant $C^{\infty}$ function 
	tend to $\infty$. %
	Bourgain--Brezis--Mironescu \cite{Bourgain_Brezis_Mironescu_2001} who showed that  
	\begin{equation} \label{eq:BBM}
		\lim_{s \to 1^-} (1-s) \iint_{\R^d \times \R^d} \frac{|f(y)-f(x)|^p}{|y-x|^{s p + d}} \d x \d y = k(d,p) \int_{\R^d} |\nabla f(x)|^p \d x
	\end{equation}
	for some constant $k(d,p)$ depending only on $d$ and $p$. Thus the blowup of the double integral as $s\to 1^-$ can be fixed by multiplying it with a factor $1-s$.
	This   formula has been extended in many directions (see e.g. \cites{Bojarski-bbm, MR4525722, DrelichmanDuran2022, HMPV, MR2773218}), and more recently even with pointwise  analogues using  Spector's nonlinear ``fractional gradient" operator %
	$$\mathscr D^s f(x) =\int_{\R^d}\frac{|f(x)-f(y)|}{|x-y|^{d+s}} \d y,$$
	from \cite{Spector20}. 
	Namely, letting $I_s$ denote the Riesz potential operator, defined for $0<s<d$ by 
	$$I_{s} f(x)=\int_{\mathbb{R}^d} |x-y|^{s-d} f(y) \d y,$$ 
	there is the pointwise limit
	\begin{equation*}
		\lim_{s \to 1^-}  (1-s)I_{s}(\mathscr{D}_{s}f)(x) = k(d,1) \, I_1(|\nabla f|)(x),
	\end{equation*}
	which was recently established in \cite{CP2026}.

	Theorem \ref{thm:main} is partially motivated by 
	a local weighted Bourgain--Brezis--Mironescu inequality for the case $p=1$, with 
	$A_1$  pairs of weights of the form $(\mu, M\mu)$, which  can be found in \cite[Theorem 2.1]{HMPV}. Here
	we let $Q$ be a cube and let $\nu$ be a Radon measure on $\mathbb{R}^d$, $d\ge 1$. Then, for every $0<s<1$ and $1\le p<\infty$, there exists a dimensional constant $c_d>0$ such that
	\begin{equation}\label{modelExample}
		\int_Q \int_Q
		\frac{|f(x)-f(y)|^p}{|x-y|^{d+sp}}
		\d y\d \nu(x)
		\le
		\frac{c_d}{1-s}
		(\sqrt d,\ell(Q))^{(1-s)p}
		\int_Q
		|\nabla f(x)|^p
		\,M(\chi_Q\nu)(x)
		\d x
	\end{equation}
	for every $f\in C^1(Q)$. 
	If we omit the factor $(1-s)^{-1}$ the corresponding inequality for $s=1$ tends to be false for non-constant $f$ (recall the already mentioned case whenever   $\nu$ is Lebesgue measure, see \cite{brezisconstants}).  
	(See also the previous work \cite[Theorem 6.2]{MR4789312}   with the  constant $s^{-1}(1-s)^{-1} $ in front).

	It should   be interesting to further pursue the study of such weighted estimates with the correct $(1-s)^{-1}$ blowup,  and for  general pairs of weights $(u,w)$ in the classes $A_q(Q)$.

	\subsection{Difference quotients and maximal functions}\label{sec:A1disc} Historically, unweighted estimates for difference quotients have been studied, in the case $1 < p < \infty$, via the classical estimate
	\begin{equation} \label{LipscType}
		\frac{|f(x)-f(y)|}{|x-y|} \lesssim_d \,\big (M |\nabla f|(x)+M|\nabla f|(y) \big)
	\end{equation}
	which seems to be written down first in  Bojarski \cite{Bojarski90}. It is valid at least for  all $f \in C^1(\R^d)$, $x\neq y$,  and can be  found in  \cite{KLV}. 
	P. Hajłasz used this estimate in \cite{Hajlasz1996} as the key tool to define Sobolev spaces $W^{1,p}$ on a metric space with a Borel measure for $p\in(1,\infty)$.  Its various uses have been discussed for example in \cite{Nguyen06}, \cite{CrippaDeLellis}, \cite{BVY} and \cite{BSVY}. %
	In one dimension, only one of the terms on the right hand side of \eqref{LipscType} is present (which is immediate from the fundamental theorem of calculus). In two and higher dimensions  arguments in \cites{Nguyen06, BVY, BSVY}  
	via \eqref{LipscType} (relying on the method of rotations in the unweighted case)  are insufficient in the weighted setting and do not recover  Theorem \ref{thm:main}. 
	For illustration we consider the case  of $\Omega=\bbR^d$.

	\begin{prop}\label{prop:M2}
		For $d \geq 1$, $\gamma\neq 0$, $1<p<\infty$,
		\begin{equation} \label{M2ineq} 
			\mu^u_\gamma(\{ (x,y): \diff_{\ga/p} f(x,y)>\la \}) \lc_{d,\gamma,p} \la^{-p} \int_{\R^d} |\nabla f(x)|^p M^2u(x) \d x
		\end{equation}	 for all $f\in C^1(\bbR^d)$. 
		In dimension $d=1$,  $M^2u$ on the right hand side of \eqref{M2ineq} can be replaced with $Mu$.             \end{prop}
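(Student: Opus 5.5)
The plan is to combine the pointwise estimate \eqref{LipscType} with the Fefferman--Stein inequality
\[
\int_{\R^d} (Mg)^p\, v \,\d x \lesssim_{d,p} \int_{\R^d} g^p\, Mv \,\d x, \qquad 1<p<\infty,
\]
which holds for an arbitrary non-negative weight $v$. We apply it once with $v=u$ and once with $v=Mu$. Set $g=|\nabla f|$. By \eqref{LipscType}, for $x\ne y$,
\[
\diff_{\ga/p}f(x,y)\lesssim_d |x-y|^{-\gamma/p}\big(Mg(x)+Mg(y)\big).
\]
Hence the superlevel set is contained in $E_1\cup E_2$, where
\[
E_1=\{(x,y): Mg(x)>c\la|x-y|^{\gamma/p}\},\qquad E_2=\{(x,y): Mg(y)>c\la|x-y|^{\gamma/p}\}.
\]
We estimate $\mu^u_\gamma(E_1)$ and $\mu^u_\gamma(E_2)$ separately.

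\emph{The set $E_1$.} Fix $x$ and put $R(x)=(Mg(x)/(c\la))^{p/\gamma}$. If $\gamma>0$, the $x$-section of $E_1$ is $\{y:|x-y|<R(x)\}$. If $\gamma<0$, it is $\{y: |x-y|>R(x)\}$. In both cases polar coordinates give
\[
\int_{E_1^x}|x-y|^{\gamma-d}\,\d y\approx_{d,\gamma} R(x)^{\gamma}=\big(Mg(x)/(c\la)\big)^p .
\]
For $\gamma>0$ this uses integrability of $r^{\gamma-1}$ at $0$, and for $\gamma<0$ integrability at $\infty$; this is where $\gamma\neq0$ is needed. Integrating against $u(x)\,\d x$ yields $\mu^u_\gamma(E_1)\lesssim\la^{-p}\int (Mg)^p u$. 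Fefferman--Stein with $v=u$ then bounds this by $\la^{-p}\int g^p\, Mu$.

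\emph{The set $E_2$.} Here we fix $y$ instead and integrate in $x$, with $R(y)$ defined as before. For $\gamma>0$ we split $\{|x-y|<R\}$ into dyadic shells $\{2^{-j-1}R\le |x-y|<2^{-j}R\}$, $j\ge0$. This gives
\[
\int_{|x-y|<R}u(x)|x-y|^{\gamma-d}\,\d x\lesssim \sum_{j\ge0}(2^{-j}R)^{\gamma-d}\,u\big(B(y,2^{-j}R)\big)\lesssim \sum_{j\ge 0}(2^{-j}R)^\gamma\, Mu(y)\lesssim R^\gamma\, Mu(y).
\]
For $\gamma<0$ we use the shells $\{2^jR\le|x-y|<2^{j+1}R\}$, $j\ge0$, in the same way; the geometric series $\sum_j 2^{j\gamma}$ converges because $\gamma<0$. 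In either case we get
\[
\mu^u_\gamma(E_2)\lesssim \la^{-p}\int (Mg)^p\, Mu\,\d y .
\]
A second application of Fefferman--Stein, now with $v=Mu$, bounds this by $\la^{-p}\int g^p\, M^2u$. Together with the bound for $E_1$ and $Mu\lesssim M^2u$, this proves \eqref{M2ineq}.

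\emph{Dimension one.} By the fundamental theorem of calculus, $|f(x)-f(y)|/|x-y|$ is at most the average of $|f'|$ over the interval with endpoints $x$ and $y$. That interval contains $x$, so the average is at most $Mg(x)$ (uncentered maximal function). Thus only $E_1$ occurs, and the argument above gives the bound with $Mu$ in place of $M^2u$.

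The one delicate point is the case analysis in $\gamma$ for $E_2$: the dyadic-shell summation must converge on the correct side (small scales for $\gamma>0$, large scales for $\gamma<0$) so that only $Mu(y)$ appears. It must also be done for fixed $y$, since otherwise no maximal function of $u$ at $y$ would arise. Beyond that, the argument only needs the Fefferman--Stein inequality, which is where $p>1$ is used; it remains valid, trivially, on the set where $Mu=\infty$.
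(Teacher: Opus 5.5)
Your proposal is correct and follows essentially the same route as the paper: split the superlevel set via \eqref{LipscType}, integrate out $y$ in the $M|\nabla f|(x)$ term and apply Fefferman--Stein once, and for the $M|\nabla f|(y)$ term fix $y$ and sum over dyadic shells in $|x-y|$ to obtain $Mu(y)$, then apply Fefferman--Stein again with weight $Mu$. The paper also handles $d=1$ the same way, observing that only the first term is present there.
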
 
	Note that 
	$Mu(x) \leq M^2u(x)$ for almost every $x \in \R^d$.  In light of the case $d=1$ 
	and Corollary \ref{cor:A1} 
	it would be desirable to try to prove a stronger inequality with $M^2u$ replaced by $Mu$ also in dimension $d\ge 2$.

	\begin{proof}
		Using  \eqref{LipscType}, we have
		\begin{equation} \label{eq:split2M}
			[\diff_{\ga/p} f]
			_{L^{p,\infty}(\d \mu^u_\ga)} 
			\lesssim_d \, \Big[\frac{M|\nabla f|(x)}{|y-x|^{\frac{\ga}{p}}} \Big]_{L^{p,\infty} (\d \mu^u_\ga)} + \Big[\frac{M|\nabla f|(y)}{|y-x|^{\frac{\ga}{p}}} \Big]_{L^{p,\infty}(\d \mu^u_\ga)}.
		\end{equation}
		
		For the first term on the right hand side of 
		\eqref{eq:split2M}
		we have a good bound, involving only one maximal function on the right hand side, namely  we can prove
		\begin{equation} \label{M2ineq1}
			\Big[\frac{M |\nabla f|(x)}{|y-x|^{\frac{\ga}{p}}} \Big]_{L^{p,\infty}(\bbR^d\times\bbR^d,  \d \mu^u_\ga )} \lesssim_{\ga,p}\|\nabla f\|_{L^p(\R^d, Mu(x) \d x)}. 
		\end{equation}	
		To see this let 
		$ \cE_1= \{(x,y) \in \R^d \times \R^d \colon |x-y|^{-\ga/p} {M|\nabla f|(x)}
		> \la  \}.$
		Note that for each fixed $x\in \bbR^d$
		\[ \int_{\{y: |x-y|^{-\ga/p} Mf(x)>\la \}} |x-y|^{\gamma-d} \d y\lc_{\ga,d} \Big( \frac{Mf(x)}{\la}\Big)^p. \]
		This works by direct calculation for both positive and negative $\gamma$ (but not for $\gamma=0$). 
		Integrating in $x$ we obtain 
		$$\mu^u_\gamma (\cE_1) \lc_{d,\gamma}  \la^{-p} \int M(|\nabla f|)(x)^p u(x) \d x
		$$
		and we can invoke the well-known Fefferman--Stein inequality \cite{FS}:  if $1<p<\infty$  there exists a constant $C_d$ such that for all non-negative measurable functions $g$ on $\R^d$,
		\begin{equation}\label{FS}
			\int_{\R^d} Mg(x)^p \,  u(x) \d x\leq C_d\,p'\int_{\R^d} |g(y)|^p\,Mu(y)\d y.
		\end{equation}
		Applying this to $g = |\nabla f|$, we obtain
		$$
		\int_{\cE_1} u(x) \d x \d y \lesssim_d \,p'  \la^{-p} \int_{y\in \R^d} |\nabla f|(y)^p\,Mu(y)  \d y,
		$$
		proving \eqref{M2ineq1}.
		
		However,  for the second term on the right hand side of \eqref{eq:split2M}  which is only present in dimensions $d\ge 2$, we can only prove 
		\begin{equation} \label{M2ineq2}
			\Big[\frac{M |\nabla f|(y)}{|y-x|^{\frac{\ga }{p}}} \Big]_{L^{p,\infty}(\R^d \times \R^d, \mu^u_\gamma )} \lesssim_{d,p} \|\nabla f\|_{L^p(\R^d, 	M^2u)}
		\end{equation}
		with two maximal functions on the right hand side, which gives rise to the two maximal functions on the right hand side of  \eqref{M2ineq}.	
		Let $
		\cE_2= \{(x,y) \in \R^d \times \R^d \colon |x-y|^{-\ga/p} {M|\nabla f|(y)}	   > \la  \}$.
		\eqref{M2ineq2} will follow if we can prove			\begin{equation}\label{eq:E2est}\mu^u_\ga(\cE_2) = \int_{y\in \bbR^d} \int_{x: |x-y|^{-\ga/p} M|\nabla f|(y) >\la } |x-y|^{\gamma-d}u(x) \d x \d y
			\lesssim_{d,p} \la^{-p} \int| \nabla f(y)|^p M^2u(y) \d y.
		\end{equation} 
		For fixed $y$ we estimate the inner integral.
		Assuming $M|\nabla f|(y)<\infty$ we  let,  for $n\in \bbN$,
		\[\cA_{n,\ga}(y)= \big\{ x\in \bbR^d:
		\la 2^{n-1}<|x-y|^{-\ga/p} M(|\nabla f|)(y) \le \la 2^n \big\}.\]      
		
		Then, for both positive and negative $\gamma$, 
		\begin{align*} \int_{x\in \cA_{n,\ga}(y)}|x-y|^{\gamma-d} u(x) \d x&\lc\Big(  \frac{M(|\nabla f|)(y)}{\la 2^n}\Big) ^{\frac {p}{\gamma}(\ga-d)}  %
			\int_{|x-y|\lc_{d,\ga}(
				\frac{M(|\nabla f|)(y)}{\la 2^n})^{\frac {p}{\gamma}   }} u(x) \d x
			\\ &\lc 2^{-np} \Big(  \frac{M(|\nabla f|)(y)}{\la }\Big) ^{p} M u(y). 
		\end{align*} 
		We integrate in $y$ and sum in $n$ to  obtain 
		$$\mu^u_\gamma(\cE_2) \lc_{d} 
		\int M(|\nabla f|)(y)^p Mu(y) \d y$$
		and another application of the Fefferman-Stein inequality \eqref{FS} yields \eqref{eq:E2est} and then \eqref{M2ineq2}.			\end{proof}

	\begin{remarka}
		Let $\nu$ be a nonnegative Radon  measure on $\bbR^d$.  
		In Proposition \ref{prop:M2} the pairs $(u, Mu)$ and  $(u,M^2 u)$  may also be replaced by
		$(\nu, M\nu)$ and $(\nu, M^2\nu)$, respectively,  if we replace the measure $\mu^u_\gamma$ with 
		$|x-y|^{\ga-d} \d\nu(x)$. One uses here that the classical Fefferman--Stein inequality \cite{FS}  also holds for general measures $\nu$.   
	\end{remarka}

	There are various refinements of  \eqref{LipscType}, which are or may  be  useful  in applications.
	One involves more regular maximal functions (leading for example to a version of Bressan's problem with Hardy spaces, see \cite[\S5.1]{HSSS}). 
	Another potentially useful replacement of \eqref{LipscType} (which had been
	included in a preliminary unpublished preprint  version of \cite{HMPV}, arXiv 2302.14029) 
	is contained in the following proposition.

	\begin{prop}\label{prop:newLip}   For all $f \in C^1(\R^d)$, all cubes $Q \subset \R^d$, and for a.e. $(x, y) \in Q\times Q$,
		\begin{equation} \label{new-LipscType}
			\frac{| f(x)-f(y) |}{|x-y|}  \lesssim_d   \min_{z\in\{x,y\}} M(|\nabla f|\chi_{Q})(z)^{\frac{1}{d}}\max_{z\in\{x,y\}} M(|\nabla f|\chi_{Q})(z)^{1-\frac{1}{d}}.\\
		\end{equation}
	\end{prop}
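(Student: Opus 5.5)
The plan is to obtain \eqref{new-LipscType} from the subrepresentation formula of Lemma~\ref{lem:poincare}, followed by a Hedberg-type splitting of each Riesz-potential term at an optimally chosen radius. The only new input is that this radius may depend on the ratio of the maximal function at the two points.

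\textbf{Reduction to two potential terms.} Take $\Omega=Q$. A cube is convex and $\dev(Q)\lesssim_d 1$. Put $g=|\nabla f|\chi_Q$ and take $x\neq y$ in the interior of $Q$; the boundary has measure zero, which accounts for the ``a.e.'' in the statement. By \eqref{eq:Poincare2},
\[
|f(x)-f(y)|\lesssim_d \sum_{(a,b)\in\{(x,y),(y,x)\}} \int_{\cB_{x,y}} \frac{g(z)}{|a-z|^{d-1}}\,\d z ,
\]
since $\cR_{x,y}\subset Q$. It therefore suffices to show, for each ordered pair $(a,b)$, that
\[
I_a:=\int_{\cB_{x,y}} \frac{g(z)}{|a-z|^{d-1}}\,\d z\lesssim_d |x-y|\,\min_{z\in\{x,y\}} Mg(z)^{1/d}\max_{z\in\{x,y\}} Mg(z)^{1-1/d}.
\]

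\textbf{Splitting each term at radius $\delta$.} Fix $\delta\in(0,|x-y|]$ and split $I_a$ into the parts $|a-z|<\delta$ and $|a-z|\ge\delta$.
\begin{itemize}
\item For the near part, decompose into dyadic shells around $a$. This gives the standard bound $\int_{|a-z|<\delta} g(z)|a-z|^{1-d}\,\d z\lesssim_d \delta\, Mg(a)$.
\item For the far part, bound the kernel by $\delta^{1-d}$. Then use $\cB_{x,y}\subset B(b,|x-y|)$, whose volume is $\approx|x-y|^d$. This controls the far part by $\delta^{1-d}|x-y|^d\,Mg(b)$.
\end{itemize}
Writing $\delta=t|x-y|$ with $t\in(0,1]$, we get
\[
I_a\lesssim_d |x-y|\,\big(t\,Mg(a)+t^{1-d}Mg(b)\big).
\]

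\textbf{Choice of $t$.} If $Mg(b)\le Mg(a)$, choose $t=(Mg(b)/Mg(a))^{1/d}\in(0,1]$. This gives $I_a\lesssim |x-y|\,Mg(a)^{1-1/d}Mg(b)^{1/d}$, which is exactly $|x-y|\min^{1/d}\max^{1-1/d}$. If instead $Mg(a)<Mg(b)$, take $t=1$ and use only the near part, since $\cB_{x,y}\subset B(a,|x-y|)$. This gives $I_a\lesssim |x-y|\,Mg(a)=|x-y|\min$, and $\min\le \min^{1/d}\max^{1-1/d}$. Adding the bounds for $I_x$ and $I_y$ gives \eqref{new-LipscType}. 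When $d=1$ the right-hand side is just $\min(Mg(x),Mg(y))$, and the argument (or directly the fundamental theorem of calculus) gives this as well.

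\textbf{Expected obstacle.} There is no real analytic obstacle here. The point that needs care is noticing that the asymmetric optimization is only useful for the term centered at the point $a$ with the \emph{larger} maximal function; the other term is handled trivially by its own (smaller) maximal function. One should also check that the degenerate cases $Mg(a)=0$ or $Mg(b)=\infty$ cause no trouble. If $Mg(a)=0$ then $g=0$ a.e. and $I_a=0$; $Mg(b)=\infty$ is excluded for a.e.\ point.
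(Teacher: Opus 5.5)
Your argument is correct and is essentially the paper's proof. The paper uses a subcube $R\subset Q$ containing $x,y$ with $\ell(R)\le|x-y|$ and the Gilbarg--Trudinger subrepresentation formula in place of $\cB_{x,y}\cap Q$ and Lemma~\ref{lem:poincare}, and it packages your Hedberg splitting as Lemma~\ref{New-pointwise-B} together with the mass bound $\int_R|\nabla f|\lesssim_d \ell(R)^d\min_{z\in\{x,y\}}M(|\nabla f|\chi_Q)(z)$; since $\cB_{x,y}$ lies in balls of radius $|x-y|$ about both $x$ and $y$, you could bound your far part by the minimum in the same way and drop your case distinction.
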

	\eqref{LipscType}  follows from \eqref{new-LipscType} by Young's inequality, i.e. \( ab\leq a^p/p+ b^{p'}/p'\) for  \( a, b \geq 0 \), and \( p > 1 \). Other  extensions  of \eqref{LipscType} in different directions will be included in the upcoming work  \cite{GLP}.

	For the proof of Proposition \ref{prop:newLip} we consider  again, for  $0<s<d$, the standard Riesz potential of a non-negative measure $\mu$, defined by 
	$I_{s} \mu(x)=\int_{\R^d} |x-y|^{s-d}  \d \mu(y) $. 
	We have the following lemma: 
	
	\begin{lemma}\label{New-pointwise-B}
		Let $Q$ be a cube in $\R^d$, $\mu$ be a non-negative measure and $0<s<d$. Then for almost every $x\in Q$,
		\begin{equation} \label{eq:Hedb-meas}
			I_s(\mu \chi_{Q})(x) \leq \frac{c_d}{s} \mu(Q)^\frac{s}{d}M(\mu\chi_{Q})(x)^\frac{d-s}{d} \,.
		\end{equation}
		In particular,
		\begin{equation} \label{eq:Hedb}
			I_s(\mu \chi_{Q})(x) \leq \frac{c_d}{s}  \ell(Q)^s\, M(\mu\chi_{Q})(x).
		\end{equation}
	\end{lemma}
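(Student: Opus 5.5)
This is Hedberg's trick: split $I_s(\mu\chi_Q)(x)$ into a near part and a far part, bound the near part by the maximal function and the far part by the total mass $\mu(Q)$, then optimize the cutoff radius. Fix $x\in Q$ with $M(\mu\chi_Q)(x)<\infty$; this holds for a.e.\ $x$ if $\mu(Q)<\infty$, and if $\mu(Q)=\infty$ the right hand side is infinite. Fix a parameter $\delta>0$ and write
\[
I_s(\mu\chi_Q)(x)=\int_{|x-y|<\delta}|x-y|^{s-d}\chi_Q(y)\,\d\mu(y)+\int_{|x-y|\ge\delta}|x-y|^{s-d}\chi_Q(y)\,\d\mu(y).
\]

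For the near part, decompose dyadically into shells $2^{-j-1}\delta\le|x-y|<2^{-j}\delta$, $j\ge0$. On the $j$-th shell the kernel is at most $(2^{-j-1}\delta)^{s-d}$, and the $\mu\chi_Q$-mass of the ball $B(x,2^{-j}\delta)$ is at most $c_d(2^{-j}\delta)^d M(\mu\chi_Q)(x)$. Summing the resulting geometric series $\sum_j 2^{-js}\lesssim 1/s$ (for $s$ small; this is where the factor $1/s$ enters), I get
\[
\lesssim_d s^{-1}\delta^s M(\mu\chi_Q)(x).
\]
For the far part, the kernel is at most $\delta^{s-d}$ since $s<d$, so this term is at most $\delta^{s-d}\mu(Q)$.

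Now I choose $\delta$ to balance the two terms, i.e.\ $\delta^d=\mu(Q)/M(\mu\chi_Q)(x)$. Then both terms equal $\mu(Q)^{s/d}M(\mu\chi_Q)(x)^{(d-s)/d}$, up to the factor $c_d/s$ on the first. The degenerate case $M(\mu\chi_Q)(x)=0$ forces $\mu(Q)=0$, so both sides vanish. This gives \eqref{eq:Hedb-meas}.

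For \eqref{eq:Hedb}, I use that the ball centered at $x\in Q$ of radius $\sqrt d\,\ell(Q)$ contains $Q$. Hence
\[
M(\mu\chi_Q)(x)\ge \frac{\mu(Q)}{c_d\,\ell(Q)^d},
\]
that is, $\mu(Q)\le c_d\,\ell(Q)^d M(\mu\chi_Q)(x)$. Inserting this into \eqref{eq:Hedb-meas} gives $\mu(Q)^{s/d}M^{(d-s)/d}\lesssim_d \ell(Q)^s M(\mu\chi_Q)(x)$, as claimed. There is no real obstacle here; the only care needed is keeping the $1/s$ dependence explicit in the dyadic summation, and handling the null-set and degenerate cases.
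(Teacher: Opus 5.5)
Your proof is correct and is essentially the paper's argument. The paper writes $I_s(\mu\chi_Q)(x)$ via the layer-cake formula and bounds $\mu(\{y\in Q:|x-y|<t^{-1/(d-s)}\})$ by $\min\{\mu(Q),\,2^d M(\mu\chi_Q)(x)\,t^{-d/(d-s)}\}$, which is your near/far Hedberg split done continuously, with the cutoff at the balancing radius $\delta^d=\mu(Q)/M(\mu\chi_Q)(x)$ and the explicit constant $2^{d-s}d/s$; the one point to tighten is that $\sum_{j\ge0}2^{-js}=(1-2^{-s})^{-1}$ and the far-part constant $1$ are both $\lesssim_d 1/s$ on the whole range $0<s<d$ (since $1<d/s$), not just for small $s$.
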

	The second estimate  \eqref{eq:Hedb} 
	is a version of  Hedberg's inequality \cite{Hedberg1972}.  However, \eqref{eq:Hedb-meas} 
	is sharper and it is more relevant for proving \eqref{new-LipscType}. 
	
	\begin{proof}[Proof of Lemma \ref{New-pointwise-B}]
		We may assume that $\mu(Q)\neq 0$, otherwise there is nothing to prove. 
		Let $Q\subset\mathbb{R}^d$ be a fixed cube and $x\in Q$. 
		To prove \eqref{eq:Hedb-meas}  we let $t>0$ and define 
		\(
		Q_{x,t}
		\)
		as the cube with center at $x$ and side length $2t^{-\frac{1}{d-s}}$. 
		Then using the layer cake formula, we obtain
		\begin{align*}
			\int_{Q}\frac{\d\mu(y)}{\lvert x-y \rvert^{d-s}}
			&= 
			\int_0^{\infty} \mu\Big(\Bigl\{y\in Q: \frac{1}{\lvert x-y \rvert^{d-s}}>t\Bigr\}\Big)  \d t
			= \int_0^{\infty} \mu\bigl(\bigl\{ y\in Q: \lvert x-y \rvert<t^{-\frac{1}{d-s}} \bigr\}\bigr) \d t \\
			&\leq \int_0^{\infty} \min\Big\{\mu(Q),
			\frac{\mu(Q_{x,t})}{|Q_{x,t}|} |Q_{x,t}| \Big\} \d t 
			\leq \int_0^{\infty}\min\bigl\{\mu(Q),M(\mu \chi_Q)(x) 2^dt^{-\frac{d}{d-s}}\bigr\} \d t
			\\
			&=\int_0^{2^{d-s}(M(\mu \chi_Q)(x)/\mu(Q))^\frac{d-s}{d}}\mu(Q)
			\d t
			+ 2^d \int_{2^{d-s}(M(\mu \chi_Q)(x)/\mu(Q))^\frac{d-s}{d}}^\infty M(\mu \chi_Q)(x)t^{-\frac{d}{d-s}}
			\d t
			\\
			&=
			\frac{2^{d-s}d}{s}
			\mu(Q)^\frac{s}{d}(M(\mu \chi_Q)(x))^\frac{d-s}{d}. \qedhere
		\end{align*}
	\end{proof}

	\begin{proof}[Proof of Proposition \ref{prop:newLip}]
		For $d=1$ the proof is immediate from
		$|f(x)-f(y)|\leq \int_x^y|f'(t)|dt\leq |x-y|M(|f'|\chi_Q)(x)$. In what follows we assume $d\ge 2$. 
		
		Let $Q$ be a cube in $\R^d$ and $f \in W^{1,1}(Q)$. For any cube $Q \subset \R^d$ and any Lebesgue points $x, y$ of $f$ in $Q$, there is a closed cube $R \subset Q$ containing both $x$ and $y$, for which $\ell(R) \leq |x-y|$.
		We will use the well-known subrepresentation formula   
		\begin{equation*}%
			|f(x)-\av_R f|\leq c_d 
			I_{1}(|\nabla f| \chi_{R})(x)
		\end{equation*}
		(see e.g. Lemma 7.16 of \cite{03561752}).
		Hence by \eqref{eq:Hedb-meas}
		\begin{equation*}%
			\lvert f(x)-\av_R f\rvert \leq
			c_d \, \Big(\int_{R}|\nabla f(z)|\d z\Big)^\frac{1}{d}M(|\nabla f| \chi_{R})(x)^{1-\frac{1}{d}} .
		\end{equation*}
		If we apply this formula with $y$ in place of $x$ and 
		observe that 
		$\lvert f(x)-f(y)\rvert$ is dominated by $\lvert f(x)-\av_R f \rvert +\lvert f(y)-\av_R f\rvert $ we obtain 
		\[
		\lvert f(x)-f(y)\rvert  \leq 2 c_d \, \Big(\int_{R}|\nabla f|(z)\,\d z\Big)^\frac{1}{d} \max_{z \in \{x,y\}} M(|\nabla f| \chi_{Q})(z)^{1-\frac{1}{d}}.
		\]
		Furthermore, since $x, y \in R$, 
		\[
		\int_{R}|\nabla f(z)|\,\d z \leq \int_{B(x,\ell(R))} |\nabla f(z)| \chi_Q(z) \, \d z \lesssim_d \ell(R)^d M(|\nabla f| \chi_Q)(x)
		\]
		and similarly  the same estimate with $x$ replaced by $y$.
		Thus 
		\[
		\Big( \int_{R}|\nabla f(z)|\,\d z \Big )^{\frac{1}{d}} \lesssim_d \ell(R) \min_{z \in \{x,y\}} M(|\nabla f| \chi_Q)(z)^{\frac{1}{d}}
		\]
		and \eqref{new-LipscType} follows since $\ell(R) \leq |x-y|$.
	\end{proof}
	
	\subsection{More results for $d=1$}  The following elementary results on $\diff_{\ga/p}$ for $d=1$, $\ga<0$  have been assisted by  ChatGPT5.5-Pro.
	Again, they raise corresponding  questions in higher dimensions. 
	
	\subsubsection{Failure of a two-weight  $A_p$  result for $\gamma=-p$.}\label{sec:example}  We report  a necessary condition  in the two weight case which does not seem to have an analogue in the one weight theory. Let $d=1$,  
	$p=q\ge 1$, $\gamma=-p$, then  inequality   \eqref{eq:p-p}    fails for some $(u,w)\in A_p(\bbR)$. 
	Namely, let $u(x)=(1+|x|)^{p-1}$, $w(y)= (1+|y|)^\beta$ for $\beta>  p-1$; then $(u,w) \in A_p(\bbR)$ (note that this fails for $\beta=p-1$ when $p > 1$). Let $f\in C^\infty(\bbR)$ such  that $f(y)=0$ for $y<-1$ and $f(y)=1$ for $y>1$ so that $\int|f'|^pw<\infty$. For $\la<1$ the set $E_{\la,-1}$ contains $A_\la=\{(x,y): x<-1, y>1\}$ which  satisfies  $\mu^u_{-p}(A_\la)=\infty$ for $\la<1$. 
	
	\subsubsection{A two-weight  $A_q$ inequality  in one dimension} In Theorem \ref{thm:main} the range of $\gamma$ can be improved for $d=1$ and $q<p$. Namely for $\gamma<0$, 
	\begin{equation} \label{eq:Aq1d}
		\mu_\gamma^u(E_{\lambda, \ga/p}(f)) \lc [u,w]_{A_q(\bbR)} \la^{-p} \int_{\R} |f'(y)|^p w(y) \d y, \quad 1\le q<p<\infty .
	\end{equation} We argue as in \S\ref{sec:A1disc} above (see also  \cites{Nguyen06, BSVY}). Since   
	$|f(x)-f(y)|\le |x-y| M[f'](x)$ for $d=1$ we have  $\fD_{\ga/p} f(x,y)\le |x-y|^{-\ga/p} M[f'](x)$. If $\fD_{\ga/p} f(x,y)>\la$ then $|x-y|\ge \la^{-p/\gamma} M[f'](x)^{p/\gamma}$; hence
	\[\mu_\gamma^u(E_{\lambda, \ga/p}(f))  \le \int_{\R} u(x) \int_{\substack {\{y:\,\, |x-y| \ge \la^{-p/\ga}M[f'] (x)^{p/\ga} \} }}|x-y|^{\ga-1} \d y\,\d x \lc \la^{-p} \int_{\R} |M[f'](x)|^p u(x) \d x.\] By  the weak type inequality \eqref{charactAp-two-weights}, with  $q$ in place of $p$, and subsequent    Marcinkiewicz interpolation with the $L^\infty$ bound we get 
	$\| M g \|_{L^p(u)} \lc_{p,q} [u,w]_{A_q}^{1/p}  \|g\|_{L^p(w)}$    for $q<p<\infty$,
	and \eqref{eq:Aq1d} follows.

	\section*{Acknowledgements}
	This work was initiated 
	during a visit of   
	P\'{e}rez 
	to the Mathematical Sciences Institute at the Australian National University, whose support we gratefully acknowledge.  
	P\'{e}rez 
	is partially supported by the Spanish government through the grant PID2023-146646NB-I00 and by
	Severo Ochoa accreditation CEX2021-001142-S, both at BCAM, and also by the Basque Government
	through grant IT1615-22 at the University of the Basque Country and by the BERC programme 2022-2025 at BCAM. 
	Seeger 
	was supported in part by NSF
	Grant 2348797, and Yung   supported by a Discovery Project DP250103744 from the Australian Research Council.

\end{document}